\newtheorem{lemma}{Lemma}[section]
\newtheorem{theorem}{Theorem}[section]
\newtheorem{corollary}{Corollary}[section]
\newtheorem{remark}{Remark}[section]
\numberwithin{equation}{section} \numberwithin{theorem}{section}
\numberwithin{example}{section} \numberwithin{remark}{section}
\numberwithin{figure}{section} \numberwithin{algorithm}{section}
\def\ba{\begin{array}}
\def\ea{\end{array}}
\def\bma{\left(\begin{matrix}}
\def\ema{\end{matrix}\right)}
\def\be{\begin{equation}}
\def\ee{\end{equation}}
\newcommand{\yupei}[1]{{#1}}
\def\dfrac{\displaystyle\frac}
\begin{document}

\title{Regular and Singular Steady States of the 2D  incompressible Euler equations near the Bahouri-Chemin Patch}
\author{Tarek M. Elgindi, Yupei Huang}

\begin{abstract}
We consider steady states of the two-dimensional incompressible Euler equations on $\mathbb{T}^2$ and construct smooth and singular steady states around a particular singular steady state. More precisely, we construct families of smooth and singular steady solutions that converge to the Bahouri-Chemin patch.
\end{abstract}

\maketitle
\section{Introductions and Notations}
\subsection{Introduction}
We study spatially periodic solutions to the 2d incompressible Euler equations. The equations are expressed in vorticity form as: 
\begin{equation}\label{1}
\begin{aligned}
    &\partial_t\omega+u\cdot \nabla \omega=0,\\
    &u=\nabla^{\perp} \triangle^{-1}\omega,
    \end{aligned}
\end{equation}
where $\nabla^{\perp}=(-\partial_y,\partial_x)$. \par
It is well-known by the Beale-Kato-Madja criterion \cite{Beale1984RemarksOT} that when the vorticity is initially smooth, the solution to \eqref{1} stays smooth for all time. Moreover, there is a double exponential upper bound for the vorticity gradient \cite{wolibner1933theoreme,YUDOVICH19631407}. An important open question is whether the double exponential upper bound of \yupei{vorticity gradient} is sharp. There are no known improvements \yupei{on} the upper bound in general, and most recent works in this direction concern specific geometric scenarios.\par
In the direction of improving the upper bounds, the authors of \cite{Hoang2014NoLD} demonstrated that the vorticity gradient can only increase exponentially for a limited (and possibly empty) set of solutions. In \cite{Bedrossian2013InviscidDA}, the authors demonstrate the presence of an open set of solutions with high regularity on $\mathbb{T}\times\mathbb{R}$ that increase at most linearly and provide a comprehensive account of the long-term behavior. For further information on the topic, one can refer to \cite{do2019vorticity,elgindi2020symmetries,itoh2016growth}, which provides evidence that double exponential merging of particles at the origin is not possible under certain symmetry conditions and in different domains.\par  In the direction of seeking solutions to the incompressible Euler equations that admit \yupei{vorticty gradient growth}, the authors generally study solutions near steady states.  The author of \cite{Denisov2008InfiniteSG} proved superlinear growth of the vorticity gradient for certain solutions near the steady state $\omega^*(x,y)=\sin{x}\sin{y}$ in $\mathbb{T}^2$. A key feature of this steady state is that the associated velocity field induces exponential expansion and contraction near the fixed point $(0,0).$ In the follow-up work \cite{Denisov2012DoubleEG}, the author constructed solutions on the $\mathbb{T}^2$ that admit double exponential growth for the vorticity gradient for a fixed time interval by perturbing a different steady state: the Bahouri-Chemin patch, which satisfies the following equation:\begin{align}
    \Delta \psi_0(x,y)=-\text{sgn}(x)\text{sgn}(y),\yupei{\quad \forall (x,y) \in \mathbb{T}^2:=\left[-\frac{1}{2},\frac{1}{2}\right]^2\bigg/ \sim.} 
\end{align} In the important work \cite{Kiselev2013SmallSC}, the authors proved that smooth solutions for the incompressible Euler equations in the disk can indeed experience double exponential growth of the gradient of vorticity, again by perturbing a version of the Bahouri-Chemin patch. In that work, the presence of a physical boundary is essential to get information on the long time dynamics of the flow field near the origin and it remained open whether double exponential growth could happen on $\mathbb{T}^2$. The author of \cite{zlatovs2015exponential} extended some of the ideas of \cite{Kiselev2013SmallSC} to give exponential growth for $C^{1,\alpha}$ solutions on $\mathbb{T}^2$ and exponential growth of the second derivative of vorticity for smooth solutions. Later, the methods introduced in \cite{Kiselev2013SmallSC} were further developed to establish double exponential growth in other settings in the presence of a physical boundary (see \cite{Kiselev2019} and \cite{xiaoqian2014}).  See also \cite{choi2022infinite,drivas2022singularity} for more recent results in the direction of long-time growth. \par  The purpose of this work is to investigate smooth and singular steady states near the Bahouri-Chemin patch on $\mathbb{T}^2$. The existence of such smooth solutions shows that  constructing smooth solutions that grow double exponentially may be significantly more subtle on $\mathbb{T}^2$ than on domains with a boundary. Our starting point is to consider the equation for steady states of the 2d incompressible Euler equations:
\begin{align}\label{steady}
    \nabla^{\bot}\psi\cdot \nabla \triangle \psi=0,
\end{align}
where $\psi$ is the stream function, $u=\nabla^{\bot}\psi$ is the velocity, and  $\omega=\triangle \psi$ is the vorticity.
It is clear that if there exists $F\in C^{1}(\mathbb{R})$ such that $\triangle\psi=F(\psi)$, then \eqref{steady} holds automatically. The above fact provides a way to find steady states of the 2d incompressible Euler equations. More specifically, if we can find a smooth function $F$ and a $\psi$ satisfying  $\psi=\triangle^{-1}F(\psi)$,  $\psi$ would correspond to the stream function for a steady state of incompressible Euler equations. \par In \cite{Choffrut2012LocalSO}, a 1-to-1 correspondence between $\psi$ and $F$ is established near arbitrary steady states $(\psi^*,F_0)$ satisfying certain non-degeneracy conditions. See also \cite{constantin2021flexibility,wirosoetisno2005persistence,Zelati2020StationarySN} for other related results in this direction. 
A key consideration in all of the above works is the mapping properties of the linear operator:
\[
\mathcal{L}_*:=\Delta-F_{*}'(\psi_*),
\]
since most of these works rely on arguments related to the implicit function theorem.
In particular, in the works mentioned above, it is assumed that $\psi_*$ and the perturbations thereof are regular enough so that it is possible to use the linearized operator $\mathcal{L}_*$. In cases where $F_*$ is not differentiable, it is not clear how to replace the use of $\mathcal{L}_*$. In particular, in the case of Bahouri-Chemin patch, which satisfies the following semi-linear elliptic equation:\begin{align}
    \label{equ}
    \Delta \psi_0(x,y)=-\text{sgn}(\psi_0)(x,y),\yupei{\quad \forall (x,y) \in \mathbb{T}^2},
\end{align}the jump discontinuity in -sgn makes it unclear how to proceed. Despite this difficulty, we are able to construct both smooth and singular steady states near $\psi_0$.  

\subsection{Main results}
We now state the main results \yupei{of} our paper.
\yupei{The first theorem concerns smooth steady states near the Bahouri-Chemin patch.}
\begin{theorem}\label{main}
 There \yupei{exists a fixed constant} $\epsilon_0>0$, such that \yupei{for any $\epsilon \in (0,\epsilon_0)$} , we can find a family of odd smooth \yupei{functions} $F_{\epsilon}$, \yupei{together with} a family of smooth and odd-odd\footnote{This means that $\psi_\epsilon(-x,y)=-\psi_\epsilon(x,y)=\psi_\epsilon(x,-y),$ for all $\epsilon,x,y.$} \yupei{functions} $\psi_{\epsilon}$ \yupei{such that the following relation holds}:\yupei{ \begin{equation}
  \psi_{\epsilon}(x,y)=\triangle^{-1}F_{\epsilon}(\psi_{\epsilon}(x,y)), \quad (x,y) \in \mathbb{T}^2=\left[-\frac{1}{2},\frac{1}{2}\right]^{2}\bigg/ \sim.
\end{equation}}
In addition, \yupei {we have the following properties:
\begin{itemize}
\item {The $L^\infty$-norm of   $\triangle{\psi_{\epsilon}}$ is bounded by $1$, i.e., $\|\triangle{\psi_{\epsilon}}\|_{L^{\infty}(\mathbb{T}^2)}\leq 1.$}
\item{ For any $\epsilon \in (0,\epsilon_0)$, the function $\psi_{\epsilon}(x,y)$ is jointly smooth in all its variables:
$\displaystyle \psi_{(\cdot)}(\cdot,\cdot)\in C^{\infty}((0,\epsilon_0)\times\mathbb{T}^2)$}.
\item When the parameter $\epsilon$ approaches $0$, the sequence of the steam functions $\psi_{\epsilon}$ converges to the Bahouri-Chemin patch $\psi_0$ in $C^{1,\alpha}(\mathbb{T}^2):$ 
\begin{equation*}
    \lim_{\epsilon\rightarrow 0}\|\psi_{\epsilon}-\psi_0\|_{C^{1,\alpha}(\mathbb{T}^2)}=0,
\end{equation*}
for any $\alpha\in [0,1)$.
\end{itemize}}
\end{theorem}

\begin{remark} 
Since $\Delta\psi_0$ itself is discontinuous, the convergence of $\psi_{\epsilon} $ to $\psi_0$ cannot happen in $C^2$ or uniformly in the vorticity.
The argument to construct smooth steady states near $\psi_0$ can be easily extended to construct smooth steady states near corresponding Bahouri-Chemin patches on the disk whose vorticity {\bf vanishes on the boundary}. Note that a consequence of \cite{Kiselev2013SmallSC} is that any odd smooth solution on the disk close to the Bahouri-Chemin patch, with non-constant vorticity on the boundary, must experience infinite gradient growth.   
\end{remark}
\yupei{Our second theorem deals with singular steady states near the Bahouri-Chemin patch.}
\begin{theorem}\label{main2}
There  \yupei{exists a fixed constant $\epsilon_0>0$, such that for any fixed $\epsilon \in (0,\epsilon_0)$, there is a steady state $\phi_{\epsilon}$ whose vorticity has algebraic singularities on the separatrices $\displaystyle\left\{x={k}/{2}\right\}_{k\in\mathbb{Z}}\cup \left\{y={k}/{2}\right\}_{k\in\mathbb{Z}}\subseteq \mathbb{T}^2=\left[-{1}/{2},{1}/{2}\right]^2\big/\sim$. Moreover, these singular steady states approach the Bahouri-Chemin patch $\psi_0$ as $\epsilon$ goes to $0$, i.e.,
\begin{equation}
\lim_{\epsilon\rightarrow 0} \|\phi_{\epsilon}-\psi_0\|_{C^1(\mathbb{T}^2)}=0.
\end{equation} Furthermore, particles transported by the velocity field of these singular steady states may hit the origin in finite time.}  
\end{theorem}
\yupei{\begin{remark}
    We recall that the particles transported by the flow generated by the Bahouri-Chemin patch can only approach the origin at most double-exponentially. This is in strong contrast to our result in Theorem \ref{main2}.
\end{remark}}

\subsection{Main ideas of the paper}

To find smooth steady states near the Bahouri-Chemin patch, we study the fixed points of the operator $T_\epsilon:=\Delta^{-1}(F_{\epsilon})$ in a certain subset of $C^{1}(\mathbb{T}^2)$. While previous results in this direction usually invoked versions of the Banach fixed-point theorem, we rely here on the Schauder fixed-point theorem, which seems to be more well-adapted to singular situations. In particular, we do not need to directly analyze any linearized operator. Toward applying the Schauder fixed point theorem, the goal will be to find a closed, convex, and bounded set that is invariant under the mapping $T_\epsilon.$. 

The existence of an invariant set for $T_\epsilon$ is largely based on the analysis of the geometry of the near-zero level sets of $\psi_0$. It is known from \cite{Bahouri1994EquationsDT} that \[C |xy| \ln\frac{1}{x^2+y^2}<|\psi_0(x,y)|,\] in a neighborhood of $(0,0),$ for some fixed constant $C>0.$ Due to the super quadratic growth of $|\psi_0|$ near the origin, we prove that the region where $|\psi_0|<\epsilon$ has an area approximately $\epsilon \ln\frac{1}{\epsilon}$ as $\epsilon\rightarrow 0.$ This observation is key in determining the right bounds for the sought invariant set of $T_\epsilon.$ A proper choice of $F_\epsilon$ combined with potential theory estimates guarantees that a suitably chosen small neighborhood of $\psi_0$ in $C^1$ is invariant under the map $\Delta^{-1}F_{\epsilon}$. \par
In the direction of finding singular steady states near the Bahouri-Chemin patch, we seek solutions for $\Delta\psi_\epsilon=F_\epsilon(\psi_\epsilon),$
where the forcing term $F_{\epsilon}$ has an algebraic singularity.  
More specifically, when $F_{\epsilon}(x)$ is an odd function such that \begin{align*}
    F_{\epsilon}(x)=&\left\{\begin{array}{cc}
         -\frac{\epsilon^s}{x^s}, &\quad 0<x<\epsilon,\\
         -1, &\quad x\geq \epsilon.
    \end{array}\right.
\end{align*}
It is not difficult to extend the proof of Theorem \ref{main} to show that corresponding solutions $\psi_\epsilon$ exist and converge to $\psi_0$ when $\epsilon\rightarrow 0$ in the case where $0<s<\frac{1}{2}.$ We are also able to do this in the case $\frac{1}{2}\leq s<1,$ but the proof requires more work. In contrast to the proof of Theorem \ref{main} and case $s<\frac{1}{2}$, we cannot simply rely on lower bounds of $|\psi_0|$ when $s\geq \frac{1}{2}.$ Here, we have to use a barrier argument (inspired by \cite{Pino1992AGE}) to establish $s$-dependent \emph{a-priori} lower bounds on $\psi_\epsilon$ that can be used to close the fixed point scheme. 

\subsection{Organization of the paper}
The rest of the paper will be organized as follows. In Section 2, we will prove Theorem \ref{main}. We will start by giving some technical lemmas and then prove the results concerning existence in Theorem \ref{main}. Then we will discuss smooth dependence on $\epsilon$ when $\epsilon>0$ and finish the proof of the Theorem \ref{main}. In Section 3, we will prove Theorem \ref{main2}. In Appendix A, we collect a few technical facts that we need in the proofs. 
\subsection{Notation}
Throughout this paper, we will reserve some characters for certain quantities according to the following rules:
\begin{itemize}
\item $\mathbb{T}^2$: $[-\frac{1}{2},\frac{1}{2}]^2/{\sim}$, where $(x_1,y_1)\sim (x_2,y_2)$ if there is a pair $(m,n)\in \mathbb{Z}^2,(x_1,y_1)=(x_2+m,y_2+n)$.
\item $\mathcal{T}$: the triangle region $\left\{0<y<x<\frac{1}{4}\right\}.$
\item $\mathbb{T}^{2+}$: the first quadrant of flat torus $\left\{0<x<\frac{1}{2},0<y<\frac{1}{2}\right\}$
\item $\mathbb{R}^{2+}$: the first quadrant in $\mathbb{R}^2$ $\left\{x>0,y>0\right \}$
    \item $B_\delta (x)$: the ball centered at $x$ with radius $\delta$. When $x=0,$ we simply write $B_\delta$.
 \item $K_{\delta}$: an approximation of the identity. $K_{\delta}(x)=\frac{1}{\delta}K(\frac{x}{\delta})$, where $K(x)$ is a smooth non-negative even function with support in $(-1,1)$, and $\int_{\mathbb{R}} K(x)dx=1$.
 \item $\epsilon:$ the continuation parameter.
  \item $\epsilon_0$: the positive constant given in Theorem \ref{main}. 
    \item $C$: Generic postive constant that can change from line to line, which is independent of $\epsilon_0$.
    \item $C_1$: A positive constant used to define the invariant set for $T_\epsilon.$
     \item $C_2$: Generic positive constant depending on $\epsilon$. 
        \item $\chi_{A}(x)$: the characteristic function of a set A.
    \item $\text{sgn}(x)$:
    \begin{align*}
    \text{sgn}(x)=&\left\{\begin{array}{cc}
         -1, &\quad x<0,\\
         1, &\quad x\geq  0.
    \end{array}\right.
\end{align*}
    \item $G$: the Green's function of the Laplacian on the torus. See Lemma \ref{GreensFunction}.
    \item $\Delta^{-1}$: $\Delta^{-1}f=f*G.$
       \item $\psi_{0}:$ \[ \psi_0=\triangle^{-1}[-\text{sgn}(x)\text{sgn}(y)],\] the stream function of the Bahouri-Chemin patch.
\end{itemize}
\section{The smooth steady states near $\psi_0$}
\yupei{The purpose of this section is to give a proof of Theorem \ref{main}.
We will firstly give the definition of $F_{\epsilon}$, and then divide the proof of Theorem \ref{main} into two parts: \begin{itemize}
    \item The existence of smooth steady states near $\psi_0$ associated to $F_\epsilon$,
    \item The smooth dependence of $\psi_{\epsilon}$ on $\epsilon$.
\end{itemize}
}
\subsection*{The definition and basic properties of $F_{\epsilon}$}\label{Construction1}\
We choose $F_{\epsilon}(x)=-sgn(.)*K_{\epsilon}(x)$. It is not difficult to show the following Lemma, whose proof relies only on the definition of convolution. 
\begin{lemma}\label{FProperties}
We have that $F_\epsilon\in C^\infty$ is odd and non-increasing. Moreover, we have that 
\[|F_\epsilon|_{L^\infty}\leq 1\qquad |F_\epsilon'|_{L^\infty}\leq \frac{C}{\epsilon}.\] If $x<-2\epsilon,$ $F_\epsilon(x)=1.$ For $\epsilon>0,$ $F_\epsilon$ is smooth in $C^k(\mathbb{R})$ for every $k.$ If $\psi_\epsilon$ is odd-odd, then $\Delta^{-1}(F_\epsilon(\psi_\epsilon))$ is also odd-odd. 
\end{lemma}

\subsection{On the existence of smooth steady states near $\psi_0$}
In this section, our main purpose is to prove the theorem below, which shows we can approximate $\psi_0$ by a sequence of smooth steady states in $C^{1,\alpha}(\mathbb{T}^2)$, for $\alpha \in[0,1)$:
\begin{theorem}\label{step1}
There is \yupei{a constant} $\epsilon_0>0$, such that \yupei{for any $\epsilon \in(0,\epsilon_0)$,} we can find a smooth function $\psi_{\epsilon}$ satisfying \begin{equation}\label{semi-linearequation} \psi_{\epsilon}=\triangle^{-1}(F_{\epsilon}(\psi_{\epsilon})).
\end{equation}
Moreover, we have the following properties:\yupei{ \begin{itemize}
    \item 
    $\|\psi_{\epsilon}-\psi_0\|_{C^{1}(\mathbb{T}^2)}\leq C\sqrt{\epsilon \ln{\frac{1}{\epsilon}}},$
    \item $|\Delta \psi_{\epsilon}|<1$,
    \item For $\alpha\in [0,1),$  we have
\begin{equation}
    \lim_{\epsilon\rightarrow 0}\ ||\psi_{\epsilon}-\psi_0\|_{C^{1,\alpha}(\mathbb{T}^2)}=0.
\end{equation}
\end{itemize}}
\end{theorem} 
As discussed in the introduction, our goal is to prove that the following non-linear map $T_{\epsilon}\psi=\triangle^{-1}(F_{\epsilon}(\psi))$ has a fixed point.  
 In the proof of Theorem \ref{main}, we will use the Schauder fixed point theorem: 
\begin{theorem}[Schauder 1930]\label{Shauder}
Let M be a closed, convex and bounded set in a Banach space X, if $T:M \rightarrow M$ is compact, then T has a fixed point. 
\end{theorem}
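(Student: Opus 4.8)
The plan is to reduce this infinite-dimensional statement to a sequence of finite-dimensional fixed point problems, where Brouwer's fixed point theorem applies, and then to extract a genuine fixed point of $T$ by a limiting argument driven by the compactness hypothesis. First I would use that $T$ is compact: since $M$ is bounded, $\overline{T(M)}$ is compact, hence totally bounded. Fixing $\delta>0$, I would choose finitely many points $y_1,\dots,y_n\in T(M)$ whose $\delta$-balls cover $\overline{T(M)}$. Because each $y_i\in T(M)\subset M$ and $M$ is convex, the convex hull $K_\delta:=\mathrm{conv}\{y_1,\dots,y_n\}$ is a compact convex subset of $M$ contained in the finite-dimensional subspace spanned by $y_1,\dots,y_n$.

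Next I would construct the Schauder projection $P_\delta:\overline{T(M)}\to K_\delta$. Setting $m_i(y)=\max\{0,\ \delta-\|y-y_i\|\}$, for every $y\in\overline{T(M)}$ at least one $m_i(y)$ is strictly positive, so I may define
\[
P_\delta(y)=\frac{\sum_{i=1}^n m_i(y)\,y_i}{\sum_{i=1}^n m_i(y)}.
\]
This map is continuous and takes values in $K_\delta$, and the crucial estimate is $\|P_\delta(y)-y\|\le\delta$: whenever $m_i(y)\neq 0$ one has $\|y_i-y\|<\delta$, and $P_\delta(y)$ is a convex combination only of such $y_i$. In this way $P_\delta$ approximates the identity on $\overline{T(M)}$ to within $\delta$ while collapsing the range into finitely many dimensions.

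I would then invoke Brouwer in finite dimensions. The composition $T_\delta:=P_\delta\circ T$ maps $K_\delta$ continuously into $K_\delta$, and identifying $K_\delta$ with a compact convex set in $\mathbb{R}^d$, where $d=\dim\mathrm{span}\{y_1,\dots,y_n\}$, Brouwer's fixed point theorem produces a point $x_\delta\in K_\delta$ with $T_\delta(x_\delta)=x_\delta$. Finally, taking $\delta=\delta_k\to 0$ yields points $x_k=P_{\delta_k}(T(x_k))$ satisfying $\|x_k-T(x_k)\|\le\delta_k$. Since $T(x_k)\in\overline{T(M)}$ and this set is compact, a subsequence obeys $T(x_{k_j})\to z\in M$; the estimate then forces $x_{k_j}\to z$ as well, and continuity of $T$ gives $T(x_{k_j})\to T(z)$, so that $T(z)=z$, completing the proof.

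The main obstacle, and the entire topological content of the argument, is Brouwer's fixed point theorem, which I would assume; everything else is the approximation machinery needed to transfer the problem from the Banach space $X$ into a setting where Brouwer applies. The points requiring genuine care are verifying that $T_\delta$ truly maps $K_\delta$ into itself (which relies on $K_\delta\subset M$ via convexity of $M$ and on $P_\delta$ landing in $K_\delta$) and ensuring that the limiting procedure extracts a fixed point of the original $T$ rather than of one of the approximants $T_{\delta_k}$; the uniform bound $\|x_k-T(x_k)\|\le\delta_k$ together with compactness of $\overline{T(M)}$ is exactly what closes this gap.
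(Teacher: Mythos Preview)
Your argument is the standard, correct derivation of Schauder's theorem from Brouwer's theorem via Schauder projections and a compactness/limiting step; the details you flag (that $K_\delta\subset M$ so $T_\delta$ maps $K_\delta$ to itself, and that the diagonal estimate $\|x_k-T(x_k)\|\le\delta_k$ together with compactness of $\overline{T(M)}$ yields a genuine fixed point of $T$) are handled properly.

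However, the paper does not prove this statement at all: it is simply quoted as a classical result (attributed to Schauder, 1930) and then invoked as a black box to establish Lemma~\ref{lemma0}. So there is no ``paper's own proof'' to compare against; your write-up supplies a complete proof where the paper provides none.
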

We notice the fact that $\|F_\epsilon\|_{L^\infty}\leq 1$ as in Lemma \ref{FProperties}, then by the properties of the Green function in Lemma \ref{GreensFunction}, $T_{\epsilon}$ is certainly a compact map from the space of $C^1$ odd-odd functions to itself. The key now to prove Theorem \ref{step1} is to find a \emph{bounded} invariant subset. This is the content of the following Lemma.
\begin{lemma}\label{lemma0}
There is a $\epsilon_0>0$ and  a constant $C_1$, such that \yupei{for any $\epsilon \in (0,\epsilon_0)$}, we have that the set 
\begin{align*}
  \mathcal{K}_\epsilon:=  &\left\{\psi\in C^1\big | \, \|\psi-\psi_0\|_{C^{1}(\mathbb{T}^2)}\leq C_1\sqrt{\epsilon\ln\frac{1}{\epsilon}}\right\}\\& \quad \cap \left\{\psi \big | \, \psi(x,y)=\psi(-x,-y)=-\psi(-x,y)=-\psi(x,-y)=\psi(y,x) \right\}
\end{align*} is invariant under the mapping $T_{\epsilon}$.  
\end{lemma} To prove that $T_\epsilon:\mathcal{K}_\epsilon\rightarrow\mathcal{K}_\epsilon,$ we need to estimate 
$\|T_\epsilon\psi-\psi_0\|_{C^1}.$ To do this, we write: 
\[\| T_\epsilon\psi- \psi_0\|_{C^1}=\| \Delta^{-1}\big(F_\epsilon(\psi)+\text{sgn}(\psi)\big)\|_{C^1}=\| \Delta^{-1}\big(F_\epsilon(\psi)+\text{sgn}(\psi)\big)\chi_{ |\psi |<2\epsilon})\|_{C^1}.\] In the last equality, we use that $F_\epsilon(x)=-\text{sgn}(x)$, when $|x|>2\epsilon$. 
It then becomes clear that we need to study the size of the set $\left\{ |\psi |<2\epsilon\right\}$ for any element $\psi\in \mathcal{K}_\epsilon.$ To do this, we start by studying the size of $\left\{ |\psi_0 |<2\epsilon\right\}.$ Application of a Steiner-type estimate and a judicious choice of $C_1$ then allows us to conclude the invariance of $\mathcal{K}_\epsilon.$ 
\subsection{Properties of the Bahouri-Chemin patch and some technical lemmas}
We have the following Lemma. 
\begin{lemma}\label{PsiProperties}
$\psi_0\in C^1(\mathbb{T}^2)$ is odd-odd and positive on $\mathbb{T}^{2+}:=(0,\frac{1}{2})^2.$ $\psi_0$ is also symmetric with respect to the diagonal line $y=x$ and the lines $x=\frac{1}{4}$ and $y=\frac{1}{4}$. 
Moreover, for $(x,y)\in \mathcal{T}:=\left\{0<y<x<\frac{1}{4}\right\},$ we have that 
    \begin{equation}\label{asytop}
       \frac{1}{C} xy\ln\frac{1}{x}<\psi_0(x,y)<C xy\ln\frac{1}{x}. 
    \end{equation}
    \begin{figure}[h]
        \centering

\tikzset{every picture/.style={line width=0.75pt}} 

\begin{tikzpicture}[x=0.75pt,y=0.75pt,yscale=-1,xscale=1]

\draw  [fill={rgb, 255:red, 126; green, 211; blue, 33 }  ,fill opacity=1 ] (199,55) -- (291,55) -- (291,147) -- (199,147) -- cycle ;
\draw  [fill={rgb, 255:red, 80; green, 227; blue, 194 }  ,fill opacity=1 ] (291,55) -- (383,55) -- (383,147) -- (291,147) -- cycle ;
\draw  [fill={rgb, 255:red, 80; green, 227; blue, 194 }  ,fill opacity=1 ] (199,147) -- (291,147) -- (291,239) -- (199,239) -- cycle ;
\draw  [fill={rgb, 255:red, 126; green, 211; blue, 33 }  ,fill opacity=1 ] (291,147) -- (383,147) -- (383,239) -- (291,239) -- cycle ;
\draw  [color={rgb, 255:red, 208; green, 2; blue, 27 }  ,draw opacity=1 ] (233,137) -- (251,146) -- (233,155) ;
\draw  [color={rgb, 255:red, 208; green, 2; blue, 27 }  ,draw opacity=1 ] (331,46) -- (349,55) -- (331,64) ;
\draw  [color={rgb, 255:red, 208; green, 2; blue, 27 }  ,draw opacity=1 ] (333,229) -- (351,238) -- (333,247) ;
\draw  [color={rgb, 255:red, 208; green, 2; blue, 27 }  ,draw opacity=1 ] (391.69,89.7) -- (383.31,107.99) -- (373.7,90.31) ;
\draw  [color={rgb, 255:red, 208; green, 2; blue, 27 }  ,draw opacity=1 ] (374.86,199.86) -- (384.14,182) -- (392.86,200.14) ;
\draw  [color={rgb, 255:red, 208; green, 2; blue, 27 }  ,draw opacity=1 ] (349.22,155.77) -- (331,147.23) -- (348.77,137.78) ;
\draw  [color={rgb, 255:red, 208; green, 2; blue, 27 }  ,draw opacity=1 ] (282.04,109.04) -- (290.96,91) -- (300.04,108.96) ;
\draw  [color={rgb, 255:red, 208; green, 2; blue, 27 }  ,draw opacity=1 ] (300.09,182.09) -- (290.91,200) -- (282.09,181.91) ;
\draw  [color={rgb, 255:red, 208; green, 2; blue, 27 }  ,draw opacity=1 ] (255.55,246.42) -- (237.02,238.56) -- (254.42,228.45) ;
\draw  [color={rgb, 255:red, 208; green, 2; blue, 27 }  ,draw opacity=1 ] (190.08,198.08) -- (198.92,180) -- (208.08,197.92) ;
\draw  [color={rgb, 255:red, 208; green, 2; blue, 27 }  ,draw opacity=1 ] (254.12,63.88) -- (236,55.12) -- (253.88,45.88) ;
\draw  [color={rgb, 255:red, 208; green, 2; blue, 27 }  ,draw opacity=1 ] (208.15,92.15) -- (198.85,110) -- (190.15,91.85) ;
\draw  [fill={rgb, 255:red, 126; green, 211; blue, 33 }  ,fill opacity=1 ] (411,78) -- (427,78) -- (427,94) -- (411,94) -- cycle ;
\draw  [fill={rgb, 255:red, 80; green, 227; blue, 194 }  ,fill opacity=1 ] (412,136) -- (428,136) -- (428,152) -- (412,152) -- cycle ;
\draw  [color={rgb, 255:red, 208; green, 2; blue, 27 }  ,draw opacity=1 ] (415,193) -- (428,200.5) -- (415,208) ;

\draw (451,77.4) node [anchor=north west][inner sep=0.75pt]    {$\Delta \psi _{0} =1$};
\draw (451,136.4) node [anchor=north west][inner sep=0.75pt]    {$\Delta \psi _{0} =-1$};
\draw (438,192) node [anchor=north west][inner sep=0.75pt]   [align=left] {Direction of velocity field};

\end{tikzpicture}

\caption{Illustrative figure for $\psi_0$}
        \label{fig:enter-label}
    \end{figure}
\end{lemma}
We will give the proof of these properties in the Appendix. 
As we have mentioned, a key step in our argument is to estimate the area of the set $\left\{ |\psi |<2\epsilon\right\}$ for $\psi\in\mathcal{K}_\epsilon.$ We start with the following key Lemma about $\psi_0.$ \begin{lemma}[Level set estimate for $\psi_0$]\label{level set}
If $A$ is sufficiently small, we have that
\begin{equation}\label{levelset123}
    \big |\left\{|\psi_0|<A\right\}\big |\leq C A \ln \ln \frac{1}{A}.
\end{equation}
\end{lemma}

\begin{proof}
By the symmetries in Lemma \ref{PsiProperties}, we may consider the points of $\left\{(x,y):|\psi_0(x,y)|<A\right\}$ in $\mathcal{T}$. In this region, again by Lemma \ref{PsiProperties}, we have that \[0<y<L_{A}(x):=\text{min}\left\{\frac{C\cdot A}{x\ln\frac{1}{x}}, x\right\}.\] 
It follows that the area of the region in question is bounded by
\[C\int_{0}^{\frac{1}{4}} L_{A}(x)\leq CA\ln\ln \frac{1}{A}.\]
 The result follows.
 \begin{figure}[h]
    \centering

\tikzset{every picture/.style={line width=0.75pt}} 

\begin{tikzpicture}[x=0.75pt,y=0.75pt,yscale=-1,xscale=1]

\draw  (160,179.4) -- (350,179.4)(179,48) -- (179,194) (343,174.4) -- (350,179.4) -- (343,184.4) (174,55) -- (179,48) -- (184,55)  ;
\draw [color={rgb, 255:red, 208; green, 2; blue, 27 }  ,draw opacity=1 ]   (220,141) -- (179,179.4) ;
\draw [color={rgb, 255:red, 208; green, 2; blue, 27 }  ,draw opacity=1 ]   (220,141) .. controls (230,155) and (270,175) .. (299,171) ;
\draw  [dash pattern={on 0.84pt off 2.51pt}]  (220,141) -- (220,182) ;

\draw (187,124.4) node [anchor=north west][inner sep=0.75pt]  [font=\scriptsize]  {$y=x$};
\draw (242,123.4) node [anchor=north west][inner sep=0.75pt]  [font=\scriptsize]  {$y=\frac{CA}{x\ln\frac{1}{x}}$};
\draw (181.04,182.77) node [anchor=north west][inner sep=0.75pt]  [font=\tiny,rotate=-359.26]  {$\frac{C\sqrt{A}}{\sqrt{\ln\frac{1}{A}}}$};

\end{tikzpicture}

    \caption{Graph of $L_{A}(x)$}
\end{figure}
\end{proof}
Next, we will do a similar level-set estimate for $\psi\in\mathcal{K}_\epsilon.$
\begin{lemma}[Key Estimate for $\psi\in\mathcal{K}_\epsilon$]\label{level set estimate2}
    For any $\psi\in \mathcal{K}_{\epsilon}$, we have \begin{equation}
        \big |\left\{(x,y): |\psi(x,y)|<2\epsilon\right\}\big |\leq C\Big( \epsilon \ln\ln\frac{1}{\epsilon}+C_1\frac{\epsilon}{\ln{\frac{1}{\epsilon}}}\Big),
    \end{equation} where $C_1$ is as in the definition of $\mathcal{K}_\epsilon.$
\end{lemma}
Restricting to $\mathcal{T},$ to prove Lemma \ref{level set estimate2}, we split the region $\left\{|\psi|<2\epsilon\right\}$ into the region $\left\{\psi<\frac{1}{2}\psi_0\right\}$ and the region $\left\{\psi>\frac{1}{2}\psi_0\right\}$. The first region is considered in the following Lemma.  
\begin{lemma}[Small error estimate]\label{small error}
Let  $\psi$ be a fixed $C^1$ function with  
    $\|\psi-\psi_0\|_{C^{1}(\mathbb{T}^2)}=B$ sufficiently small,
 then
\begin{equation*}
    \Omega_{B}:=\left\{(x,y)\in\mathcal{T}:(\psi_0-\psi)(x,y)>\frac{1}{2}\psi_0(x,y)\right\}\subseteq \left\{0<y<x<\frac{CB}{\ln{\frac{1}{B}}}\right\}.
\end{equation*}
\end{lemma}
\begin{figure}[h]
    \centering

\tikzset{every picture/.style={line width=0.75pt}} 

\begin{tikzpicture}[x=0.75pt,y=0.75pt,yscale=-1,xscale=1]

\draw   (464,30) -- (180,276) -- (464,276) -- cycle ;
\draw  [dash pattern={on 0.84pt off 2.51pt}]  (283,190) -- (283,273) ;
\draw [shift={(283,276)}, rotate = 270] [fill={rgb, 255:red, 0; green, 0; blue, 0 }  ][line width=0.08]  [draw opacity=0] (8.93,-4.29) -- (0,0) -- (8.93,4.29) -- cycle    ;
\draw [shift={(283,187)}, rotate = 90] [fill={rgb, 255:red, 0; green, 0; blue, 0 }  ][line width=0.08]  [draw opacity=0] (8.93,-4.29) -- (0,0) -- (8.93,4.29) -- cycle    ;
\draw  [fill={rgb, 255:red, 248; green, 231; blue, 28 }  ,fill opacity=1 ] (219,252) .. controls (219,241.51) and (232.43,233) .. (249,233) .. controls (265.57,233) and (279,241.51) .. (279,252) .. controls (279,262.49) and (265.57,271) .. (249,271) .. controls (232.43,271) and (219,262.49) .. (219,252) -- cycle ;

\draw (302,201.4) node [anchor=north west][inner sep=0.75pt]    {$\frac{CB}{\ln\frac{1}{B}}$};
\draw (240,241.4) node [anchor=north west][inner sep=0.75pt]    {$\Omega _{B}$};

\end{tikzpicture}

    \caption{Graph of $\Omega_{B}$}
\end{figure}
\begin{proof}
The set $\Omega_{B}$ in question is contained in \[\left\{(x,y): 0<y<x, \psi_0(x,y)<2By\right\},\]  since $\psi_0-\psi$ vanishes on $y=0$ and its derivative is bounded by $B.$ By Lemma \ref{PsiProperties}, it follows that 
\[x\ln\frac{1}{x}<C B.\] The result follows. 
\end{proof}

We now give the proof of Lemma \ref{level set estimate2}
\begin{proof}[Proof of Lemma \ref{level set estimate2}]
    By the various symmetry assumptions of $\psi\in\mathcal{K}_{\epsilon}$, it suffices to restrict the study on the domain \[\Omega=\left\{(x,y)\in\mathcal{T}:|\psi(x,y)|<2\epsilon\right\}.\]
    We divide the domain $\Omega$ into two parts:
   \begin{align}\label{Omega1}
    &\Omega_1=\left\{\psi_{0}(x,y)>2\psi(x,y)\right\}\cap \Omega,\\
    & \Omega_2=\left\{\psi_{0}(x,y)<2\psi(x,y)\right\}\cap\Omega.
\end{align}
It is obvious that \[\Omega_1 \subseteq \left\{(x,y)\in\mathcal{T}:(\psi_0-\psi)(x,y)>\frac{1}{2}\psi_0(x,y)\right\}.\] By applying Lemma \ref{small error}, with $B= C_1\sqrt{\epsilon\ln\frac{1}{\epsilon}}$, we have \begin{equation}\label{levelset5}
    \big |\Omega_1\big |\leq CC_1\frac{\epsilon \ln\frac{1}{\epsilon}}{\big(\ln\frac{1}{C_1}+\ln\dfrac{1}{\epsilon\ln\frac{1}{\epsilon}}\big)^2}\leq C C_1\frac{\epsilon}{\ln{\frac{1}{\epsilon}}}.
\end{equation}
Moreover, we have $\Omega_2\subseteq \left\{(x,y)\in\mathcal{T}:\psi_0<4\epsilon\right\}$, apply Lemma \ref{level set estimate2}, we have the following estimate of $\Omega_2$:
\begin{equation}\label{levelset6}
    \big |\Omega_2\big |\leq C \epsilon \ln\ln\frac{1}{\epsilon}.
\end{equation}
We thus close the proof of Lemma \ref{level set estimate2} by combining \eqref{levelset5} and \eqref{levelset6}.
\end{proof}
Steiner type estimate below will be used in the proof of Lemma \ref{lemma0}.
\begin{lemma}[Steiner type estimate]\label{Steiner}
$\int_{\Omega}\frac{1}{|(x,y)-(x_1,y_1)|}dx_1dy_1\leq C \sqrt{\big |\Omega\big |}$.
\end{lemma}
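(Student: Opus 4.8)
The statement to prove is: for any measurable set $\Omega$ and any point $(x,y)$,
$$\int_{\Omega}\frac{1}{|(x,y)-(x_1,y_1)|}\,dx_1\,dy_1\leq C\sqrt{|\Omega|}.$$
Let me think about how I would prove this.

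The plan is to use the classical rearrangement idea: the integrand $1/|(x,y)-(x_1,y_1)|$ is a radially decreasing function of $(x_1,y_1)$ centered at $(x,y)$, so among all sets of a fixed area, the integral is maximized when $\Omega$ is replaced by the disk of the same area centered at $(x,y)$. More precisely, I would first reduce to the case where the base point is the origin by translating, noting the integral is translation invariant. Then I would invoke the bathtub principle / Hardy–Littlewood inequality: since $g(z) := 1/|z|$ has super-level sets $\{g > t\} = B_{1/t}$ which are balls centered at the origin, for any set $\Omega$ of finite measure we have $\int_{\Omega} g \le \int_{\Omega^*} g$, where $\Omega^* = B_\rho$ is the ball centered at the origin with $|\Omega^*| = |\Omega|$, i.e.\ $\rho = \sqrt{|\Omega|/\pi}$. (On the torus one should be slightly careful: either work on a fundamental domain in $\mathbb{R}^2$ since $|\Omega| \le 1$ forces $\rho$ to be small, or note the distance function is locally Euclidean; in any case the Euclidean estimate dominates.)

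The second and final step is the explicit computation on the disk. In polar coordinates,
$$\int_{B_\rho}\frac{1}{|z|}\,dz = \int_0^{2\pi}\!\!\int_0^{\rho}\frac{1}{r}\,r\,dr\,d\theta = 2\pi\rho = 2\pi\sqrt{\frac{|\Omega|}{\pi}} = 2\sqrt{\pi}\sqrt{|\Omega|}.$$
So the constant $C = 2\sqrt{\pi}$ works (a cruder constant is fine for the applications). This completes the argument.

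I do not expect any real obstacle here; the only point requiring a line of care is the torus-versus-plane issue mentioned above, and the fact that the singularity $1/|z|$ is integrable in two dimensions (so all integrals are finite and the rearrangement inequality applies). An alternative, completely elementary route avoiding rearrangement theory: decompose $\Omega$ using dyadic annuli $A_k = \{2^{-k-1}\rho \le |z| < 2^{-k}\rho\}$ for an appropriate $\rho$, but the layer-cake/bathtub approach is cleanest and I would present that.
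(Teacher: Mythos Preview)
Your proposal is correct and is essentially the same as the paper's proof: the paper simply says the lemma ``follows from writing the integral in polar coordinates with respect to $(x,y)$,'' which is exactly your rearrangement-to-the-ball argument followed by the polar-coordinate computation $\int_{B_\rho}|z|^{-1}\,dz=2\pi\rho=2\sqrt{\pi}\sqrt{|\Omega|}$. Your write-up merely makes explicit the bathtub/Hardy--Littlewood step that the paper leaves implicit.
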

The proof of Lemma \ref{Steiner} simply follows from writing the integral in polar coordinates with respect to $(x,y)$.
\subsubsection{Proof of Lemma \ref{lemma0}}
 We now establish Lemma \ref{lemma0}. By the symmetries enjoyed by elements of $\mathcal{K}_\epsilon$, it suffices to prove that for all $(x,y)\in\mathcal{T}$, we have:
 \begin{equation*}
     \big |\nabla (T_{\epsilon}\psi-\psi_{0})(x,y)\big |\leq C_1 \sqrt{\epsilon\ln\frac{1}{\epsilon}}.
 \end{equation*}
Using the various symmetry assumptions on $\psi$ and Lemmas \ref{GreensFunction} and \ref{FProperties}, we have:
\begin{equation*}
    \begin{aligned}
        &\quad \big |\nabla (T_{\epsilon}\psi-\psi_{0})(x,y)\big |\\&=\left |\int_{\mathbb{T}^2}\big(F_{\epsilon}(\psi(x_1,y_1))+\text{sgn}(x_1)\text{sgn}(y_1)\big)\nabla_{x,y}G((x,y),(x_1,y_1))dx_1dy_1\right|\\&\leq C\int_{\mathbb{T}^{2+}}\big(1+\frac{1}{|(x-x_1,y-y_1)|}\big)\big |F_{\epsilon}(\psi(x_1,y_1))+1\big |dx_1dy_1\\&\leq \int_{\mathbb{T}^{2+}\cap\{\psi(x,y)<2\epsilon\}}\frac{C}{|(x-x_1,y-y_1)|}dx_1dy_1 .        
    \end{aligned}
\end{equation*}
Note that in the second equality above, we used that $F_\epsilon(z)= -1$ when $
z>2\epsilon.$
Furthermore, using Lemma \ref{Steiner}, Lemma \ref{level set estimate2},
we have
\begin{equation*}
    \begin{aligned}    &\quad \big |\nabla (T_{\epsilon}\psi-\psi_{0})(x,y)\big |\\&\leq \sqrt{\big |\left\{\psi(x,y)<2\epsilon\right\}\big |} \\&\leq C \sqrt{\epsilon \ln\ln\frac{1}{\epsilon}}+CC_1\sqrt{\frac{\epsilon}{\ln\frac{1}{\epsilon}}}\\ &\leq \frac{C_1\sqrt{\epsilon\ln\frac{1}{\epsilon}}}{4}.
    \end{aligned}
\end{equation*}
This concludes the proof of Lemma \ref{lemma0}.
\subsubsection{Return to the proof of Theorem \ref{step1} }
Now that we have Lemma \ref{lemma0}, by the Schauder fixed point theorem, we can find $\psi_{\epsilon}$ such that 
 \begin{align*}
     &\|\psi_{\epsilon}-\psi_0\|_{C^{1}(\mathbb{T}^2)}\leq C_1 \sqrt{\epsilon\ln\frac{1}{\epsilon}},\\
     &\psi_{\epsilon}=\triangle^{-1}F_{\epsilon}(\psi_{\epsilon}).
 \end{align*}  Moreover, since $F_\epsilon$ is smooth, $\psi_\epsilon$ is also smooth for any $\epsilon>0$ with the $\epsilon$-independent bound: 
 \begin{equation}\label{Holder endpoint}
     \|\triangle \psi_{\epsilon}\|_{L^{\infty}}=\|F_{\epsilon}(\psi_{\epsilon})\|_{L^{\infty}}\leq 1.
 \end{equation}
 From \eqref{Holder endpoint}, we have 
 for $\beta \in (\alpha,1)$, there is a constant $C_{\beta}$, such that \begin{equation*}
     \|\psi_{\epsilon}\|_{C^{1,\beta}(\mathbb{T}^2)}\leq C_{\beta}.
 \end{equation*}
Using interpolation, it is not difficult to show then that
 \begin{equation*}
    \lim_{\epsilon\rightarrow 0} \|\psi_{\epsilon}-\psi_0\|_{C^{1,\alpha}(\mathbb{T}^2)}=0.
\end{equation*}
This concludes the proof of Theorem \ref{step1}, which gives the existence of a family of smooth steady states $\psi_{\epsilon}$ near $\psi_0$. The proof does not give any information on the dependence of $\psi_\epsilon$ on $\epsilon$. We will consider this in the next subsection and thus complete the proof of Theorem \ref{main}.

\subsection{The smooth dependence of $\psi_{\epsilon}$ with respect to $\epsilon$}
\yupei{In this section, we will first recall a version of the inverse function theorem:\begin{theorem}[Inverse function theorem for steady state] \label{stability}
Let  $F(t,x) \in C^{\infty}(\mathbb{R}^2)$, we assume \begin{align*}
    &\triangle \psi_0^{*}=F(0,\psi_0^{*}),\\
    &\psi_0^{*} \in \mathcal{H}^2:=H^2(\mathbb{T}^2)\cap \left\{\psi|\psi(x,y)=\psi(-x,-y)=-\psi(-x,y)=-\psi(x,-y)=\psi(y,x)\right\}.
\end{align*} 
Furthermore, we assume the following:
\begin{itemize}
    \item [$(\mathbb{H}_1)$] $\triangle-D_{x}F(0,\psi^{*})$ be an isomorphism from $\mathcal{H}^2$ to
\begin{equation*}
    \mathcal{L}^2:= L^2(\mathbb{T}^2)\cap\left\{\psi|\psi(x,y)=\psi(-x,-y)=-\psi(-x,y)=-\psi(x,-y)=\psi(y,x)\right\}.
\end{equation*}
\end{itemize}
 Then there are positive constants $a$ and $b$, such that \begin{itemize}
     \item for $|t|<a,$ there is uniquely a function $\psi_t^{*}$ such that \begin{equation}
         \begin{aligned}
             \|\psi_t^{*}-\psi_{0}^{*}\|_{\mathcal{H}^2}<b,
         \end{aligned}
     \end{equation}
     \item $\psi_{(\cdot)}^{*}$ is a smooth map from $(-a,a)$ to $\mathcal{H}^2$.
 \end{itemize}
\end{theorem}
 We will
 prove the steady states we constructed in Theorem \ref{step1} satisfy the assumption $\mathbb{H}_1$ of Theorem \ref{stability}:\begin{theorem}\label{invertible}
 Let $\psi_{\epsilon}$ belong to \begin{equation}
    \mathcal{C}_{\epsilon}:=\left\{\psi|\psi(x,y)=\psi(y,x)=-\psi(x,-y)=-\psi(-x,y),\|\psi-\psi_0\|_{C^1(\mathbb{T}^2)}\leq C_1\sqrt{\epsilon\ln\frac{1}{\epsilon}}\ln\ln\frac{1}{\epsilon}\right\}
\end{equation} 
and satisfy the semi-linear elliptic equation \eqref{semi-linearequation}.
There is a positive constant $\epsilon_0$, such that for $0<\epsilon<\epsilon_0$,
 $\triangle-F^{'}_{\epsilon}(\psi_{\epsilon})$ is an isomorphism from $\mathcal{H}^2$
 to $\mathcal{L}^2$.
\end{theorem} 
Based on Theorem \ref{stability} and Theorem \ref{invertible}
, for every fixed $\psi_{\epsilon}$ we constructed in Theorem \ref{step1}, there is a smooth curve consisting of steady states satisfying \eqref{semi-linearequation} passing through $\psi_{\epsilon}$.
We can then prove the $\left\{\psi_{\epsilon}\right\}$ we construct in Theorem \ref{step1} is a smooth curve consisting of smooth steady states close to $\psi_0$ based on the following uniqueness result concerning the solutions of \eqref{semi-linearequation}\begin{theorem}\label{uniqueness}
There is a positive constant  $\epsilon_0$, such that for any fixed $\epsilon\in (0,\epsilon_0)$, the following statement holds. 
Assume that $\psi^{1}$ and $\psi^{2}$ are two solutions to the semi-linear elliptic equation \eqref{semi-linearequation} in the function space $\mathcal{C}_{\epsilon}$,
 then \begin{equation*}
    \psi^1=\psi^2.
\end{equation*}
\end{theorem}
}

\subsubsection{On the proof of Theorem \ref{invertible}}
In this section, we will discuss the proof of Theorem \ref{invertible} to verify that the steady states we construct in Theorem \ref{step1} satisfy the assumption of Theorem \ref{stability}.
Firstly, we need the lemma below which estimates the thickness of the sublevel set of $\psi\in \mathcal{C}_{\epsilon}$:
\begin{lemma}\label{levelset estimate3}
For any $\psi\in \mathcal{C}_{\epsilon}$, we have the estimate \begin{equation}
    \Omega=\left\{(x,y)\in \mathcal{T}:|\psi|<2\epsilon\right\}\subseteq \Omega_{level}\cup \Omega_{error},
\end{equation}
    where $\Omega_{level}$ and $\Omega_{error}$ is defined as \begin{equation}\label{Omega5}
        \Omega_{level}:=\left\{(x,y):y\leq\frac{C\sqrt{\epsilon}}{\ln\frac{1}{\epsilon}}\right\},
    \end{equation}
    \begin{equation}\label{Omega6}
        \Omega_{error}:=\left\{(x,y):0<y<x<CC_1\frac{\sqrt{\epsilon}}{(\ln{\frac{1}{\epsilon}})^{\frac{1}{4}}}\right\}.
    \end{equation}
\end{lemma}
We now give the proof of Lemma \ref{levelset estimate3}
\begin{proof}[Proof of Lemma \ref{levelset estimate3}]
    Let $\Omega_1, \Omega_2$ be the ones defined as below: \begin{align}
    &\Omega_1=\left\{\psi_{0}(x,y)>2\psi(x,y)\right\}\cap \Omega,\\
    & \Omega_2=\left\{\psi_{0}(x,y)<2\psi(x,y)\right\}\cap\Omega.
\end{align}
    we have \begin{equation}
        \Omega\subseteq \Omega_1 \cup \Omega_2.
    \end{equation}
   By Lemma \ref{small error} with $B= C_1\sqrt{\epsilon\ln\frac{1}{\epsilon}}\ln\ln\frac{1}{\epsilon}$, we have
    $\Omega_1\subseteq \Omega_{error}$.
     In $\Omega_2$, we use the bound \begin{equation}
        y<Cmin(x,\frac{2\epsilon}{x\ln x}),
    \end{equation} and we have $\Omega_2\subseteq \Omega_{level}$.\\ The proof is now completed.
    \begin{figure}[h]
        \centering
\begin{minipage}[t]{.5\textwidth}
\centering

\tikzset{every picture/.style={line width=0.75pt}} 

\begin{tikzpicture}[x=0.75pt,y=0.75pt,yscale=-0.75,xscale=0.75]

\draw   (471,59) -- (235,262) -- (471,262) -- cycle ;
\draw [color={rgb, 255:red, 0; green, 0; blue, 0 }  ,draw opacity=1 ] [dash pattern={on 0.84pt off 2.51pt}]  (339.03,176) -- (339.97,262) ;
\draw [shift={(340,265)}, rotate = 269.38] [fill={rgb, 255:red, 0; green, 0; blue, 0 }  ,fill opacity=1 ][line width=0.08]  [draw opacity=0] (10.72,-5.15) -- (0,0) -- (10.72,5.15) -- (7.12,0) -- cycle    ;
\draw [shift={(339,173)}, rotate = 89.38] [fill={rgb, 255:red, 0; green, 0; blue, 0 }  ,fill opacity=1 ][line width=0.08]  [draw opacity=0] (8.93,-4.29) -- (0,0) -- (8.93,4.29) -- cycle    ;
\draw  [fill={rgb, 255:red, 80; green, 227; blue, 194 }  ,fill opacity=1 ] (299,218) .. controls (315,194) and (322,220) .. (325,236) .. controls (328,252) and (317,248) .. (307,254) .. controls (297,260) and (301,250) .. (291,246) .. controls (281,242) and (283,242) .. (299,218) -- cycle ;

\draw (296,227.4) node [anchor=north west][inner sep=0.75pt]    {$\Omega _{1}$};
\draw (348,193.4) node [anchor=north west][inner sep=0.75pt]  [font=\tiny]  {$CC_{1}\sqrt{\epsilon } \ \left(\ln\frac{1}{\epsilon }\right)^{\frac{-1}{4}}$};

\end{tikzpicture}

\caption{Graph of $\Omega_1$}
    \end{minipage}
      \begin{minipage}[t]{.45\textwidth}    

\tikzset{every picture/.style={line width=0.75pt}} 

\begin{tikzpicture}[x=0.75pt,y=0.75pt,yscale=-0.5,xscale=0.5]

\draw   (504,11) -- (189,285) -- (504,285) -- cycle ;
\draw    (269,217) -- (503,217) ;
\draw  [fill={rgb, 255:red, 80; green, 227; blue, 194 }  ,fill opacity=1 ] (240,244) .. controls (248.43,237.15) and (307.24,226.61) .. (359.13,234.57) .. controls (411.03,242.54) and (362.31,235.07) .. (363.89,235.32) .. controls (365.48,235.58) and (446.49,243.84) .. (483.32,255.63) .. controls (520.15,267.43) and (498,266) .. (501,266) .. controls (504,266) and (505.16,260.96) .. (505,276) .. controls (504.84,291.04) and (494.43,282.61) .. (489,284) .. controls (483.57,285.39) and (387.57,284.33) .. (331,285) .. controls (274.43,285.67) and (199,284) .. (189,285) .. controls (179,286) and (231.57,250.85) .. (240,244) -- cycle ;
\draw [fill={rgb, 255:red, 139; green, 87; blue, 42 }  ,fill opacity=1 ] [dash pattern={on 0.84pt off 2.51pt}]  (395.09,221) -- (396.91,285) ;
\draw [shift={(397,288)}, rotate = 268.36] [fill={rgb, 255:red, 0; green, 0; blue, 0 }  ][line width=0.08]  [draw opacity=0] (10.72,-5.15) -- (0,0) -- (10.72,5.15) -- (7.12,0) -- cycle    ;
\draw [shift={(395,218)}, rotate = 88.36] [fill={rgb, 255:red, 0; green, 0; blue, 0 }  ][line width=0.08]  [draw opacity=0] (8.93,-4.29) -- (0,0) -- (8.93,4.29) -- cycle    ;

\draw (412,225.4) node [anchor=north west][inner sep=0.75pt]  [font=\footnotesize]  {$\frac{C\sqrt{\epsilon }}{\ln\left(\frac{1}{\epsilon }\right)}$};
\draw (345,251.4) node [anchor=north west][inner sep=0.75pt]    {$\Omega _{2}$};

\end{tikzpicture}

        \caption{Graph of $\Omega_2$}
\end{minipage}%
    \end{figure}
\end{proof}
We now to apply Lemma \ref{levelset estimate3} to prove Theorem \ref{invertible}.
\begin{proof}[Proof of Theorem \ref{invertible}]
Since $\triangle$ is  an isomorphism from  $\mathcal{H}^2$ to $\mathcal{L}^2$, and $F_{\epsilon}^{'}$ is relative compact to $\Delta$, by the Fredholm alternative, $\triangle-F_{\epsilon}^{'}$ is isomorphism if and only if it is injective.
\\Per absurdum, if $\triangle-F_{\epsilon}^{'}$ is not isomorphism, there is non-zero $\psi \in \mathcal{H}^2$  such that \begin{equation}\label{absurdum}   \triangle\psi=F_{\epsilon}^{'}(\psi_{\epsilon}) \psi.
\end{equation} 
Multiply \eqref{absurdum} by $\psi$ and use integration by parts, from various symmetry assumptions on $F_{\epsilon}$, $\psi$ and $\psi_{\epsilon}$ we have:
\begin{equation}
    \begin{aligned}
&\quad\int_{\mathbb{T}^2}\big |\nabla \psi \big |^2dxdy\\&=16\int_{\mathcal{T}} F_{\epsilon}^{'}(\psi_{\epsilon}) \big |\psi\big |^2dxdy     
    \end{aligned}
\end{equation}
From Lemma \ref{FProperties} and Lemma \ref{levelset estimate3}, we have
\begin{equation}\label{psiunique1}
    \begin{aligned}
        &\quad\int_{\mathbb{T}^2}\big |\nabla \psi \big |^2dxdy \leq \frac{C}{\epsilon}\int_{\{\psi_{\epsilon}<2\epsilon\}\cap\mathcal{T}}  \big |\psi\big |^2dxdy\\&\leq \frac{C}{\epsilon}\int_{\Omega_{level}}\big |\psi\big |^2dxdy+\frac{C}{\epsilon}\int_{\Omega_{error}}\big |\psi\big |^2dxdy:=J_{1}^1+J_{1}^2.
    \end{aligned}
\end{equation}.
\\\noindent\emph{Estimate of $J_1^{1}$:}\\
From H\H older inequality, we have
\begin{equation}\label{mid1theorem2.4}
    \begin{aligned}
\big |\psi(x,y)\big |^2&=\big |\psi(x,0)+\psi(x,y)-\psi(x,0)\big |^2\leq \left(\int_{0}^{y} \big |\psi_{y}\big |(x,s)ds\right)^2\\&\leq y \int_{0}^{y}\big |\psi_{y}\big |^2(x,s)ds\leq y \int_{0}^{\frac{1}{2}}\big |\psi_{y}\big |^2(x,s)ds.        
    \end{aligned}
\end{equation}
By \eqref{mid1theorem2.4} and \eqref{Omega5}, we have 
\begin{equation}\label{psiunique2}
    \begin{aligned}
        J_{1}^{1}&\leq\frac{C}{\ln^2(\epsilon)}\int_{\mathcal{T}} \big |\psi_{y}\big |^2dxds\\&\leq\frac{1}{16} \int_{\mathbb{T}^2}\big |\nabla \psi\big |^2dxdy.
    \end{aligned}
\end{equation}
\noindent\emph{Estimate on $J_{1}^{2}$:}\\
 By \eqref{mid1theorem2.4} and \eqref{Omega6}, we have
\begin{equation}\label{psiunique3}
\begin{aligned}
    J_{1}^2&\leq \frac{C}{\sqrt{\ln\frac{1}{\epsilon}}}\int_{\mathcal{T}}\big |\nabla \psi\big |^2dxdy\\&\leq\frac{1}{16}\int_{\mathbb{T}^2}\big |\nabla \psi\big |^2dxdy.
    \end{aligned}
\end{equation}
 By \eqref{psiunique1}, \eqref{psiunique2} and \eqref{psiunique3},
 we would have
 \begin{equation*}
     \int_{\mathbb{T}^2}\big |\nabla \psi\big |^2dxdy\leq \frac{1}{2}\int_{\mathbb{T}^2}\big |\nabla \psi\big |^2dxdy,
 \end{equation*} which implies
 \begin{equation*}
     \nabla\psi=0.
 \end{equation*}  The result follows.
\end{proof}
\subsubsection{The sketch of proof of Theorem \ref{uniqueness}}
The proof of Theorem \ref{uniqueness} is similar to the proof of Theorem \ref{invertible} and we will sketch here.
\begin{proof}[Sketch of the proof of Theorem \ref{uniqueness}]
Since $\Delta(\psi^1-\psi^2)=F_{\epsilon}(\psi^1)-F_{\epsilon}(\psi^2)$, by various symmetry assumptions on $F_{\epsilon}$, $\psi^1$ and $\psi^2$, we have 
\begin{equation}
    \begin{aligned}\label{psiunique4}
        &\quad\int_{\mathbb{T}^2} \big |\nabla(\psi^1-\psi^2)\big |^2dxdy\\&=16\int_{\mathcal{T}} (\psi_1-\psi_2)(F_{\epsilon}(\psi^2)-F_{\epsilon}(\psi^1))dxdy\\&\leq\frac{C}{\epsilon}\int_{\mathcal{T}\cap\{\psi^1(x,y)<4\epsilon\}} \big |\psi^1-\psi^2\big |^2dxdy+\frac{C}{\epsilon}\int_{\mathcal{T}\cap\{\psi^2(x,y)<4\epsilon\}}\big |\psi^1-\psi^2\big |^2dxdy.
    \end{aligned}
\end{equation}
Since $\psi^1,\psi^2\in\mathcal{C}_{\epsilon}$
from Lemma \ref{levelset estimate3}, we have \begin{equation}
\begin{aligned}
     &\mathcal{T}\cap\left\{\psi^{i}(x,y)<4\epsilon\right\} \subseteq \Omega_{level} \cup \Omega_{error},\text{for $i=1,2,$}
\end{aligned}
\end{equation}
where $\Omega_{level}$ is defined in \eqref{Omega5} and $\Omega_{error}$ is defined in \eqref{Omega6}.
Again by \eqref{mid1theorem2.4}, from \eqref{psiunique4}
 we have:
\begin{equation*}
    \begin{aligned}
&\quad\int_{\mathbb{T}^2}\big |\nabla(\psi^1-\psi^2)\big |^2dxdy\\&\leq \frac{C}{\sqrt{\ln\frac{1}{\epsilon}}}\int_{\mathbb{T}^2}\big |\nabla(\psi^1-\psi^2)\big |^2dxdy\\&\leq\frac{1}{2}\int_{\mathbb{T}^2}\big |\nabla(\psi^1-\psi^2)\big |^2dxdy,   
    \end{aligned}
\end{equation*}
which implies $\psi^1=\psi^2,$ and we get a contradiction.
\end{proof}
\subsubsection{Concluding the proof of Theorem \ref{main}} 
Given the setup of Theorem \ref{main}, by Theorem \ref{invertible}, and Theorem \ref{stability}  we have: $\forall \epsilon^{*}, 0<\epsilon^{*}<\epsilon_0$, we can find constant $a(\epsilon^*)$ and a family of function $\left\{\psi^{*}_{\epsilon}\right\}$, such that \begin{equation}\label{smooth1}
    \psi^{*}_{.}(.,.)\in C^{\infty}((\epsilon^*-a,\epsilon^*+a)\times \mathbb{T}^2),
\end{equation} and \begin{equation}\label{smooth2}
    \begin{aligned}
    &\psi^{*}_{\epsilon^{*}}=\psi_{\epsilon^{*}},\\
    &\psi^*_{\epsilon}=\triangle^{-1}F_{\epsilon}(\psi^{*}_{\epsilon}),\\ &\|\psi^*_{\epsilon}-\psi_0\|_{C^{1}(\mathbb{T}^2)}<C_1\sqrt{\epsilon}\sqrt{ln\frac{1}{\epsilon}}\ln\ln\frac{1}{\epsilon}.
    \end{aligned}
\end{equation} 
While in the Theorem \ref{step1}, the steady states we construct satisfy the following equation: \begin{equation}
    \begin{aligned}
    &\psi_{\epsilon}=\triangle^{-1}F_{\epsilon}(\psi_{\epsilon}),\\ &\|\psi_{\epsilon}-\psi_0\|_{C^{1}(\mathbb{T}^2)}<C_1 \sqrt{\epsilon\ln\frac{1}{\epsilon}}<C_1\sqrt{\epsilon\ln\frac{1}{\epsilon}}\ln\ln\frac{1}{\epsilon}.
    \end{aligned}
\end{equation}  
By Theorem \ref{uniqueness} and \eqref{smooth2}, we proved \begin{equation}\label{extension}
    \psi^*_{\epsilon}=\psi_{\epsilon}.
\end{equation} 
The equation \eqref{extension} in particular implies that the steady states $\left\{\psi_{\epsilon}\right\}$ we constructed in Theorem \ref{step1} is a smooth curve consisting of smooth steady states that converges to $\psi_0$ as $\epsilon$ tends to $0$. 
\section{Singular steady states near $\psi_0$}
In this section, we will discuss 
  solutions of
  \begin{equation}\label{algebraic}
  \Delta \phi_{\epsilon}=P_{\epsilon}(\phi_{\epsilon})    
  \end{equation}
   in $\mathbb{T}^2$, where $P_{\epsilon}(x)$ is an odd function such that 
\begin{align*}
    P_{\epsilon}(x)=&\left\{\begin{array}{cc}
        - \frac{\epsilon^s}{x^s}, &\quad 0<x<\epsilon,\\
         -1, &\quad x\geq \epsilon.
    \end{array}\right.
\end{align*}
We will prove the following statement:
\begin{theorem}\label{Algebraic singularity}
Let $0<s<1$, there is a constant $\epsilon_0>0$, if 
   $0<\epsilon<\epsilon_0,$
 \eqref{algebraic} has a unique $ C^1(\mathbb{T}^2)$ solution $\phi_{\epsilon}$ in the class 
    \begin{equation}
        \begin{aligned}
            &\mathcal{D}:=\left\{\phi|\phi(x,y)=-\phi(-x,y)=-\phi(x,-y)=\phi(y,x)\right\}\\&\cap \left \{\phi|\phi(x,y)\geq \psi_0(x,y), \text{for $(x,y)\in\mathcal{T}$}\right\}.
        \end{aligned}
    \end{equation}
 Moreover, we have the following property \begin{itemize}
    \item \begin{equation}
    \lim_{\epsilon\rightarrow 0}\|\phi_{\epsilon}-\psi_0\|_{C^{1}(\mathbb{T}^2)}=0,
\end{equation}
\item The Lagrangian flow generated by $\phi_\epsilon$ becomes immediately discontinuous as multiple trajectories can emanate from or arrive at the origin in finite time.  
\end{itemize}   
\end{theorem}
The uniqueness of the solution can be derived from the maximum principle. (see for example, \cite{Pino1992AGE}.) For the existence of solutions and the $C^1$ convergence, when $0<s<\frac{1}{2}$, we could use similar arguments as what we did in the proof of Theorem \ref{main}. For the case $\frac{1}{2}\leq s<1,$ we need better control on $\phi_\epsilon$ near the origin.
\subsection{On the proof of Theorem \ref{Algebraic singularity}}
We will first state and prove a uniform estimate for $P_{\epsilon}(\phi_{\epsilon})$.
\begin{lemma}\label{Uniform upper bound}
Let $1\leq p<\frac{1}{s}$, for any $\phi_{\epsilon}\in \mathcal{D}$, there is a positive constant $C(p)<\infty$, such that if  $\epsilon<\epsilon_0$ we have \begin{equation}
    \|P_{\epsilon}(\phi_{\epsilon})\|_{L^{p}(\mathbb{T}^2)}\leq C(p).
\end{equation}
In particular, by H\H older inequality, \begin{equation}
    P_{\epsilon}(\phi_{\epsilon})\in L^1.
\end{equation}
\end{lemma} 
\begin{proof}[Proof of Lemma \ref{Uniform upper bound}]
By the construction of $P_{\epsilon}$, 
we have \begin{equation*}
    \big |P_{\epsilon}(\phi_{\epsilon})\big |\leq 1+ \frac{\epsilon^{s}}{\big |\phi_{\epsilon}\big |^s}\chi_{|\psi_0|<\epsilon}.
    \end{equation*}
Since $\big |\phi_{\epsilon}\big |>\big |\psi_{0}\big |,$ we have
\begin{equation*}
     \big |P_{\epsilon}(\phi_{\epsilon})\big |\leq 1+ \frac{\epsilon^{s}}{|\psi_{0}|^s}\chi_{|\psi_0|<\epsilon}.
\end{equation*}
Then by Lemma \ref{PsiProperties}, we have \begin{equation}\label{upper bound}
    \big |P_{\epsilon}(\phi_{\epsilon})(x,y)\big |\leq  C(1+\frac{1}{|xy\ln{(x^2+y^2)}|^{ps}}).
\end{equation}
\eqref{upper bound} finishes the proof of Lemma \ref{Uniform upper bound}.
\end{proof}
We now divide the proof of Theorem \ref{Algebraic singularity} into three parts:
\begin{itemize}
    \item{1} The existence of the solution in the Theorem \ref{Algebraic singularity} when $0<s<\frac{1}{2}$.
    \item{2} The existence of the solution in the Theorem \ref{Algebraic singularity} when $\frac{1}{2}\leq s<1$.
    \item{3} Existence of Lagrangian trajectories emanating from the origin.
\end{itemize}

\subsubsection{On the existence of the solution in Theorem \ref{Algebraic singularity} when $0<s<\frac{1}{2}$}
In this section, we prove the existence of solutions to \eqref{algebraic} when $0<s<\frac{1}{2}$.
We have the following result: 
\begin{lemma}\label{lemma2}
Let $0<s<\frac{1}{2}$. Define $T_\epsilon$ by
\[T_{\epsilon}(\phi)=\triangle^{-1}P_{\epsilon}(\phi).\] Then, there exists
$\epsilon_0>0$, $C(s)>0$, so that for all $ 0<\epsilon <\epsilon_0$, the set \[\left\{\psi|\|\phi-\psi_0\|_{C^{1}(\mathbb{T}^2)}\leq C(s)\sqrt{-\epsilon \ln\epsilon}\right\}\cap \mathcal{D}\] is invariant under the mapping $T_{\epsilon}$.
\end{lemma}
\begin{proof}[Proof of Lemma \ref{lemma2}]
By the explicit form of $T_{\epsilon}$, we can prove the various symmetries given in the definition of $\mathcal{D}$ are persevered under $T_{\epsilon}$. Then from maximum principle, we get $\phi\geq \psi_{0}$ in $\mathcal{T}$ is also perserved.
Moreover,by Lemma \ref{GreensFunction}, and using the various symmetry assumption on $P_{\epsilon}$,$\phi$ and $\psi_0$, similar to the proof of Theorem \ref{main}, we have 
\begin{equation}\label{exi1}
\begin{aligned}
    &\quad\big |\nabla(T_{\epsilon}\phi-\psi_0)(x,y)\big |\\&=\left|\int_{\mathbb{T}^2} \nabla_{x,y}G((x,y),(x_1,y_1))(P_{\epsilon}(\phi)-\Delta \psi_0)(x_1,y_1)dx_1dy_1\right|\\&\leq C
 \int_{\mathbb{T}^{2+}} \frac{C}{|(x_1-x,y-y_1)|}\big |P_{\epsilon}(\phi)(x_1,y_1)+1\big |dx_1dy_1
\end{aligned}    
\end{equation}
  By Lemma \ref{PsiProperties}  Lemma \ref{level set} and \eqref{exi1} we have
\begin{equation}\label{exi2}
    \begin{aligned}
        &\quad \big |\nabla(T_{\epsilon}\phi-\psi_0)(x,y)\big |\\&=\sum_{k=1}^{\infty}\int_{\mathbb{T}^{2+}\cap\{\frac{\epsilon}{2^{k-1}}>\phi(x_1.y_1)>\frac{\epsilon}{2^{k}}\}}\frac{C}{|(x_1-x,y-y_1)|}\big |P_{\epsilon}(\phi)(x_1,y_1)+1\big |dx_1dy_1\\&\leq \sum_{k=1}^{\infty}\int_{\mathbb{T}^{2+}\cap\{\frac{\epsilon}{2^{k-1}}>\phi(x_1.y_1)\}}\frac{C}{|(x_1-x,y-y_1)|}|\times [2^{ks}+1] dx_1dy_1\\&\leq C \sum_{k=1}^{\infty} \frac{2^{ks}+1}{2^{\frac{k}{2}}}\sqrt{\epsilon \ln\frac{2^k}{\epsilon}}\\&\leq C\sqrt{\epsilon \ln\frac{1}{\epsilon}}.
    \end{aligned}
\end{equation}
 The proof of Lemma \ref{lemma2} is now finished.
\end{proof}
 \begin{remark}
     By Interpolation theorem in H\H older space, it is not difficult to show as $\epsilon \rightarrow 0$, for $0<\alpha<1-2s$
\begin{equation}
    \|\phi_{\epsilon}-\psi_0\|_{C^{1,\alpha}(\mathbb{T}^2)}\rightarrow 0.
\end{equation}
 \end{remark} 
 In the above we finish the proof of Theorem \ref{Algebraic singularity} in the case where $0<s<\frac{1}{2}$.
In the following section, we will discuss the proof of Theorem \ref{Algebraic singularity} when the degree of algebraic singularity s satisfies $\frac{1}{2}\leq s< 1$.
\subsection{On the proof of Theorem \ref{Algebraic singularity} when $\frac{1}{2}\leq s<1$}
In this section, we will use an argument similar to the previous section to construct a solution to \eqref{algebraic} in low regularity. We will then prove bounds on the solution in a neighborhood of the origin in the first quadrant. Based on this fact, we can use the method of \cite{Pino1992AGE} to prove the solution is $C^1$ and that it converges to $\phi_0$ in $C^1(\mathbb{T}^2)$ as $\epsilon\rightarrow 0$.\\
\subsubsection{On the existence of solutions to \eqref{algebraic} in low regularity}
The lemma below proves the existence of a solution to \eqref{algebraic} in low regularity.
\begin{lemma}\label{lowerregularity existence}
For $\frac{1}{2}\leq s<1$, and  $q<\frac{1}{s}$, we have an solution $\phi_{\epsilon}\in W^{2,q}\cap \mathcal{D}$ for \eqref{algebraic}.
\end{lemma}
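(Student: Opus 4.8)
The plan is to mimic the approximation argument that was used in the case $0<s<\tfrac12$, but to work in the space $W^{2,q}(\mathbb{T}^2)$ (with $q<\tfrac1s$) rather than in $C^1(\mathbb{T}^2)$, since for $s\geq\tfrac12$ the forcing term $P_\epsilon(\phi_\epsilon)$ is no longer bounded but only lies in $L^q$ for $q<\tfrac1s$. First I would introduce the truncated nonlinearities $P_\epsilon^n$ exactly as before, where $P_\epsilon^n$ replaces the singular branch $-\epsilon^s/x^s$ by the constant $-2^{ns}$ on $0<x<\epsilon/2^n$, so that each $P_\epsilon^n$ is bounded and non-increasing. For each fixed $n$, running the Schauder fixed point argument of Lemma \ref{lemma2} (with the monotonicity constraint $f\leq\psi_0$ on the first quadrant built into the convex set, which is what produces the pointwise comparison $|\phi_\epsilon^n|\geq|\psi_0|$) yields a solution $\phi_\epsilon^n$ of $\Delta\phi_\epsilon^n=P_\epsilon^n(\phi_\epsilon^n)$ with $\phi_\epsilon^n\leq\psi_0$ on $\{0\leq y\leq x\leq\tfrac12\}$ and with a $C^1$ bound of size $C\sqrt{-\epsilon\ln\epsilon}$; this part is genuinely the same as before because each $P_\epsilon^n$ is a bounded function.

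Next I would derive an $n$-\emph{uniform} bound in $W^{2,q}$. The key is that, because $\phi_\epsilon^n\leq\psi_0<0$ in the first quadrant, we have $0<-\phi_\epsilon^n$ so that $|P_\epsilon^n(\phi_\epsilon^n)|\leq 1+\epsilon^s/|\phi_\epsilon^n|^s\leq 1+\epsilon^s/|\psi_0|^s$ on the region $\{|\psi_0|<\epsilon\}$, uniformly in $n$; then Property 4 of $\psi_0$ ($|\psi_0|\gtrsim |xy|$ near the axes) together with the Level set estimate (Lemma \ref{level set}) gives the pointwise bound $|P_\epsilon^n(\phi_\epsilon^n)|\leq C(1+|xy|^{-s})$, which lies in $L^q(\mathbb{T}^2)$ for every $q<\tfrac1s$ with a bound independent of both $n$ and $\epsilon$. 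Calder\'on--Zygmund (elliptic $L^q$) estimates then give $\|\phi_\epsilon^n\|_{W^{2,q}(\mathbb{T}^2)}\leq C(q)$ uniformly in $n$. By weak compactness in $W^{2,q}$ and compact embedding $W^{2,q}\hookrightarrow C^1$ (valid since $q<\tfrac1s<2$ forces one to be slightly careful: for $q$ close to $\tfrac1s$ and $s$ close to $1$ we only get $q$ slightly above $1$, so $W^{2,q}\hookrightarrow W^{1,q^*}$ and, after another bootstrap using that $P_\epsilon^n(\phi_\epsilon^n)$ is controlled in a better space away from the axes, one still passes to the limit), we extract $\phi_\epsilon^n\to\phi_\epsilon$ with $\phi_\epsilon\in W^{2,q}$, $\phi_\epsilon$ odd-odd symmetric, and $\phi_\epsilon\leq\psi_0$ in the first quadrant. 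Since $P_\epsilon^n\to P_\epsilon$ pointwise away from the zero set and $\phi_\epsilon^n\to\phi_\epsilon$ uniformly with the uniform $L^q$ domination $|P_\epsilon^n(\phi_\epsilon^n)|\leq C(1+|xy|^{-s})$, the dominated convergence theorem identifies the limit equation as $\Delta\phi_\epsilon=P_\epsilon(\phi_\epsilon)$ in the sense of distributions, hence in $W^{2,q}$ by elliptic regularity.

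I expect the main obstacle to be the passage to the limit near the separatrices $\{x=0\}\cup\{y=0\}$: one must make sure that the truncation level does not interact badly with the shrinking region $\{|\phi_\epsilon^n|<\epsilon/2^n\}$, i.e. that no concentration of the measure $P_\epsilon^n(\phi_\epsilon^n)\,dx\,dy$ escapes to the axes as $n\to\infty$. This is exactly where the \emph{a priori} comparison $\phi_\epsilon^n\leq\psi_0$ is essential, since it forces $\{|\phi_\epsilon^n|<\epsilon/2^n\}\subseteq\{|\psi_0|<\epsilon/2^n\}$, whose area is $O(2^{-n}\epsilon\ln(2^n/\epsilon))\to 0$, and on which $|P_\epsilon^n(\phi_\epsilon^n)|\leq 2^{ns}$; hence the contribution of that set to any $L^q$ norm is $O(2^{ns}(2^{-n}\epsilon|\ln\epsilon|)^{1/q})\to 0$ precisely because $q<\tfrac1s$. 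So the equality $\Delta\phi_\epsilon=P_\epsilon(\phi_\epsilon)$ genuinely holds up to the axes. The remaining assertions of the lemma (odd-odd symmetry and the weak form of the equation) are preserved along the limit automatically, and the membership $\phi_\epsilon\in W^{2,q}$ for every $q<\tfrac1s$ follows by taking $q$ arbitrarily close to $\tfrac1s$ in the above and using the uniform bound. This establishes the low-regularity solution; its upgrade to $C^1$ is then carried out in the subsequent subsections via the barrier argument for $\tilde\psi$ and the Biot--Savart estimates already outlined.
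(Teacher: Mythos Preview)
Your proposal follows essentially the same route as the paper: solve the truncated problems $\Delta\phi_\epsilon^n=P_\epsilon^n(\phi_\epsilon^n)$ via the Schauder argument of Lemma~\ref{lemma2} (retaining the ordering constraint $f\leq\psi_0$ in the invariant set), use the resulting comparison $\phi_\epsilon^n\leq\psi_0$ on the first quadrant to obtain the $n$-uniform pointwise majorant $|P_\epsilon^n(\phi_\epsilon^n)|\leq C(1+|xy|^{-s})\in L^p$ for $p<\tfrac1s$, deduce a uniform $W^{2,p}$ bound, and pass to the limit by compactness plus dominated convergence. Your discussion of why no mass concentrates on the truncation set $\{|\phi_\epsilon^n|<\epsilon/2^n\}$ is a nice addition that the paper leaves implicit.

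One point to correct: your assertion that the Schauder step yields ``a $C^1$ bound of size $C\sqrt{-\epsilon\ln\epsilon}$'' uniformly in $n$ is false for $s\geq\tfrac12$, and the paper explicitly flags this (``except now we don't have a uniform bound in $C^1(\mathbb{T}^2)$ independent of $n$''). The dyadic sum in \eqref{exi2} reads $\sum_{k\geq1}(2^{ks}+1)2^{-k/2}\sqrt{-\epsilon\ln(\epsilon/2^k)}$, which diverges precisely when $s\geq\tfrac12$. For \emph{fixed} $n$ the sum effectively terminates (since $P_\epsilon^n$ is constant on $(0,\epsilon/2^n)$), so you still get existence of $\phi_\epsilon^n$ with some $n$-dependent $C^1$ bound and, crucially, with $\phi_\epsilon^n\leq\psi_0$; but the uniform control must come from the $W^{2,q}$ estimate, not from $C^1$. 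This does not break your argument, since the only uniform bound you actually use downstream is the $L^q$ bound on the forcing; just delete the uniform-$C^1$ claim. Relatedly, your parenthetical about $W^{2,q}\hookrightarrow C^1$ is indeed delicate (for $s\geq\tfrac12$ one has $q<\tfrac1s\leq2$, so this embedding fails); the clean route, as in the paper, is weak compactness in $W^{2,p}$ together with compact embedding into a space strong enough to pass to the limit in $P_\epsilon^n(\phi_\epsilon^n)$ (e.g.\ $C^0$), for which Rellich--Kondrachov suffices.
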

The proof of Lemma \ref{lowerregularity existence} is similar as what we did in the previous section and we will sketch here.
\begin{proof}[Sketch of the proof of Lemma \ref{lowerregularity existence}],
Define $P_{\epsilon}^{n}$ be an odd function such that  \begin{align*}
    P_{\epsilon}^{n}(x)=&\left \{\begin{array}{cc}
         -2^{ns},& \quad 0<x<\frac{\epsilon}{2^n},\\
    -\frac{\epsilon^s}{x^s},&\quad \frac{\epsilon}{2^n}<x<\epsilon,\\ -1,&\quad x\geq \epsilon.
    \end{array}\right. 
\end{align*}
We define the operator $T_{\epsilon}^{n}(\phi)=\Delta^{-1}P_{\epsilon}^{n}(\phi)$, 
similarly as what we did in Lemma \ref{lemma2}, for any $n$, we can show there is a $\phi_{\epsilon}^{n} \in \mathcal{D}$ such that \begin{equation}
    \phi_{\epsilon}^{n}=\Delta^{-1}[P_{\epsilon}^{n}(\phi_{\epsilon}^{n})].
\end{equation}  
Moreover, similarly, as what we did in the proof of Lemma \ref{Uniform upper bound}, we have 
\begin{equation}\label{upper bound2}
     \big |P_{\epsilon}^{n}(\phi_{\epsilon})(x,y)\big |\leq  C(1+\frac{1}{|xy|^{ps}}).
\end{equation}
As a consequence,
\begin{equation}
    \|\phi_{\epsilon}^{n}\|_{W^{2,p_0}}\leq C, \text{for $q<p_0<\frac{1}{s}$}.
\end{equation}
 From the Soblev embedding theorem, there is a subsequence $\left\{\phi_{\epsilon}^{n_k}\right\}$ of $\left\{\phi_{\epsilon}^{n}\right\}$ converges to $\phi_{\epsilon}$  almost everywhere.
 Moreover, by \eqref{upper bound2}, and dominated convergence Theorem, we have $\left\{P_{\epsilon}^{n_k}(\phi_{\epsilon}^{n_k})\right\}$ converges to $P_{\epsilon}(\phi_{\epsilon})$ in $L^{q}$.
As a consequence, we have
$\phi_{\epsilon}^{n_k}=\Delta^{-1}P_{\epsilon}^{n_k}(\phi_{\epsilon}^{n_k})$ converges to $\phi_{\epsilon}$ in $W^{2,q}$,
 and $\phi_{\epsilon}$ is a solution of \eqref{algebraic} in $\mathcal{D}\cap W^{2,q}$.
\end{proof}
\subsubsection{On the $C^1$ regularity of $\phi_{\epsilon}$}
In this section, our goal is to prove:
\begin{theorem}
\label{C1regularitytheorem}
    $\phi_{\epsilon}\in C^1(\mathbb{T}^2)$.
\end{theorem}
 We firstly get a lower bound for $\phi_{\epsilon}$ in a small neighborhood near the origin in the first quadrant:
\begin{lemma}\label{maximum principle}
There is a constant $C(s)$ so that for all $\epsilon$ sufficiently small and $(x,y)\in\Omega_3:=\mathbb{T}^{2+}\cap B_{C(s)\epsilon^{\frac{1}{2}}}$, we have \begin{equation*}
    \phi_{\epsilon}(x,y)\geq C\epsilon^{\frac{s}{s+1}}r^{\frac{2}{s+1}} \sin(2\theta),
\end{equation*}
where we write $(x,y)=(r\cos(\theta),r\sin(\theta))$.
\end{lemma}
To prove Lemma \ref{maximum principle}, we construct a barrier function:
\begin{lemma}\label{Barrier function}
    There is a solution $\tilde{\psi}$ to \begin{equation}\label{special}
\begin{aligned}
     &\Delta\tilde{\psi}=-\frac{\epsilon^s}{\tilde \psi^{s}},\text{for $(x,y)\in \mathbb{R}^{2+}:=\mathbb{R}^{+}\times \mathbb{R}^{+}$,}\\
     &\tilde{\psi}(x,y)=0,\text{for $(x,y)\in \partial\mathbb{R}^{2+}$,}
\end{aligned}
\end{equation}
such that \begin{equation}\label{maxiun principle 18}
    C\epsilon^{\frac{s}{s+1}}r^{\frac{2}{s+1}}\sin(2\theta)>\tilde{\psi}>\frac{1}{C}\epsilon^{\frac{s}{s+1}}r^{\frac{2}{s+1}}\sin(2\theta),\text{when $\theta \in[0,\frac{\pi}{2}]$.}
\end{equation} 
\end{lemma}
We will give the proof of Lemma \ref{Barrier function} in the Appendix.
Now we will present the proof for Lemma \ref{maximum principle} with the aid of Lemma \ref{Barrier function}.
\begin{proof}[Proof of Lemma \ref{maximum principle}]
By \eqref{maxiun principle 18}, we can choose $C(s)$ such that in $\Omega_3:=B_{C(s)\epsilon^{\frac{1}{2}}}\cap\mathbb{T}^{2+}$, we have \begin{equation}\label{lowerbound1}
    \tilde{\psi}<\epsilon.
\end{equation} 
Moreover in $\partial \Omega_3$, we have 
\begin{equation*}
     \phi_{\epsilon}>\psi_0\geq \frac{1}{C} \epsilon \ln\frac{1}{\epsilon} \sin(2\theta).
 \end{equation*} 
 While from \eqref{maxiun principle 18},
 \begin{equation*}
     \tilde{\psi}\leq C\epsilon sin(2\theta).
 \end{equation*}
 As a result, we have on $\partial \Omega_3 \cap \mathbb{T}^{2+}$,
 \begin{equation}\label{maximum principle 19}
    \phi_{\epsilon}\geq \tilde{\psi}.  
 \end{equation}
Now if $\tilde{\psi}-\phi_{\epsilon}$ achieves the positive maximum  at $(x_0,y_0))$ in $\Omega_3$,  by \eqref{maximum principle 19},  \begin{equation*}
     (x_0,y_0) \in Int(\Omega_3),
 \end{equation*}
 and \begin{equation}\label{maximum principle 2}
     \Delta(\tilde{\psi}-\phi_{\epsilon})(x_0,y_0)\leq 0.
 \end{equation} However, we have  $0<\phi_{\epsilon}(x_0,y_0)<\tilde{\psi}(x_0,y_0)<\epsilon,$ then \begin{equation*}
     \Delta \phi_{\epsilon}(x_0,y_0)=\frac{\epsilon^s}{(-\phi_{\epsilon})^s}(x_0,y_0)< \frac{\epsilon^s}{(-\tilde{\psi}^s)}(x_0,y_0)=\Delta \tilde{\psi}(x_0,y_0),
 \end{equation*} which leads to contradiction to \eqref{maximum principle 2} . \\
 We now see that in $\Omega_3$ we have \begin{equation*}
    \phi_{\epsilon}\geq \tilde{\psi},
 \end{equation*}  which completes the proof in view of \eqref{maxiun principle 18}.
 \end{proof}
 Now we are ready to prove Lemma \ref{C1regularitytheorem}. We will use a version of Lemma 3.3 in \cite{Pino1992AGE}:
 \begin{corollary} \label{Del}
  Let $0<R<2$, assume that $f\in L^p{(-R,R)}$, for some $p>1$. Let
  \begin{equation*}
      g(x,y)=\int_{B_R} \frac{1}{|(x,y)-(x_1,y_1)|}f(y_1)dx_1dy_1,
  \end{equation*} then there is a a constant $c(p)$ so that \begin{equation*}
      |g(x,y)|\leq c(p) R[\int_{-1}^{1}\big |f(Ry_1)\big |^{p}dy_1]^{\frac{1}{p}}.
  \end{equation*}
 \end{corollary}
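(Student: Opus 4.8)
The statement is a one–dimensional refinement of the Steiner–type bound in Lemma~\ref{Steiner}: it replaces $\sqrt{|\Omega|}$ by the $L^p$ norm of the one–variable profile $f$ rescaled to the unit interval. The plan is to reduce the two–dimensional integral to a one–dimensional one by Fubini's theorem, estimate the resulting one–dimensional kernel explicitly, and then close the estimate with H\"older's inequality after rescaling the $y_1$–variable to $[-1,1]$. Since $f$ depends only on $y_1$ and the kernel is nonnegative, I would first write
\begin{equation*}
|g(x,y)|\le \int_{-R}^{R}|f(y_1)|\,\mathcal{K}(x,y,y_1)\,dy_1,\qquad \mathcal{K}(x,y,y_1):=\int_{-R}^{R}\frac{dx_1}{\sqrt{(x-x_1)^2+(y-y_1)^2}},
\end{equation*}
where the slice $\{x_1:x_1^2+y_1^2<R^2\}$ has been enlarged to all of $(-R,R)$, which only increases the integral.

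The next step is to bound $\mathcal{K}$. Using the antiderivative $\int ((x-x_1)^2+a^2)^{-1/2}\,dx_1=\operatorname{arcsinh}\!\big(\tfrac{x_1-x}{a}\big)$ with $a=|y-y_1|$, together with $\operatorname{arcsinh}(u)\le \log(2|u|+1)$, and an elementary case split according as the evaluation point lies within or far from the slab $\{|x_1|\le R\}$, one obtains a clean positive, scale–invariant bound
\begin{equation*}
\mathcal{K}(x,y,y_1)\le C\,\log\!\Big(10+\frac{R}{|y-y_1|}\Big)
\end{equation*}
with an absolute constant $C$, valid for all $(x,y)\in\mathbb{R}^2$ (when $|y-y_1|$ is comparable to or larger than $R$, or when $(x,y)$ is far from $B_R$, the left side is already $O(1)$). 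After the substitution $y_1=Rz_1$ this gives
\begin{equation*}
|g(x,y)|\le CR\int_{-1}^{1}|f(Rz_1)|\,\log\!\Big(10+\frac{1}{|y/R-z_1|}\Big)\,dz_1,
\end{equation*}
and H\"older's inequality with exponents $p$ and $p'=p/(p-1)$ yields
\begin{equation*}
|g(x,y)|\le CR\Big(\int_{-1}^{1}|f(Rz_1)|^p\,dz_1\Big)^{1/p}\Big(\int_{-1}^{1}\log^{p'}\!\Big(10+\frac{1}{|c-z_1|}\Big)\,dz_1\Big)^{1/p'},\qquad c=y/R.
\end{equation*}

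It then remains to check that the last factor is bounded by a finite constant depending only on $p$, uniformly in $c\in\mathbb{R}$. Indeed, for $|c|\le 2$ the substitution $t=c-z_1$ reduces the integral to $\int_{-3}^{3}\log^{p'}(10+1/|t|)\,dt$, which is finite because $\log^{p'}(1/|t|)$ is locally integrable (substitute $t=e^{-u}$), while for $|c|>2$ the integrand is bounded by $\log^{p'}(11)$; hence $\sup_{c\in\mathbb{R}}\int_{-1}^{1}\log^{p'}(10+1/|c-z_1|)\,dz_1=:c(p)^{p'}<\infty$. Absorbing this and the earlier constant into a single $c(p)$ gives $|g(x,y)|\le c(p)\,R\big(\int_{-1}^{1}|f(Ry_1)|^p\,dy_1\big)^{1/p}$, as claimed. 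I do not expect a genuine obstacle here; the only points requiring care are making sure the bound on $\mathcal{K}$ stays positive and $R$–homogeneous so that H\"older applies without sign issues, and handling the regime where $(x,y)$ is far from $B_R$ — but that is precisely where every integral in sight is harmless.
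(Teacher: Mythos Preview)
Your argument is correct and self--contained. The paper, by contrast, does not prove the $R=1$ case at all: it quotes the bound $|\tilde g(x,y)|\le c(p)\|f\|_{L^p(-1,1)}$ from \cite{Pino1992AGE} as a black box and then observes that the general statement follows by the change of variables $(x_1,y_1)=(R\tilde x_1,R\tilde y_1)$, which turns $g$ into $R$ times the unit--ball quantity applied to $f_R(\cdot)=f(R\,\cdot)$. So the two proofs differ in kind: the paper's is a one--line scaling reduction to a cited result, whereas you actually carry out the underlying estimate via Fubini, an explicit bound $\mathcal K\le C\log(10+R/|y-y_1|)$ on the one--dimensional kernel, and H\"older with the observation that $\log^{p'}(10+1/|t|)$ is locally integrable. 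Your route has the advantage of being elementary and revealing why only $p>1$ is needed (the logarithmic singularity lies in every $L^{p'}$); the paper's route is shorter but depends on the external reference.
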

 \begin{proof}[Proof of Theorem \ref{C1regularitytheorem}]
     From the trivial lower bound $\phi_\epsilon\geq \psi_0$ (in the first quadrant), we see that 
     \[\big |\Delta\phi_\epsilon\big |\leq \min\left\{\frac{\epsilon^s}{\psi_0^s},1\right\}.
     \] Now using Corollary \ref{Del} as well as the Green's function representation of $\phi_\epsilon$, we see that \begin{equation}\label{Cor3.1 1}
       \phi_{\epsilon} \in C^1 (\mathbb{T}^{2} - \{(0,0)\} ).
   \end{equation}
   Next, we will  establish the $C^1$ regularity on the whole of $\mathbb{T}^2$ using the more precise lower bound in Lemma \ref{maximum principle}.
   Let $r_0<C(s) \epsilon^{\frac{1}{2}}$, then for all points $(x,y) \in B_{r_0}$, we have that
  \begin{equation*}
 \begin{aligned}
     &\quad \nabla(\phi_{\epsilon}-\psi_0)(x,y)\\&=\int_{B_{r_0}}\frac{(x-x_1,y-y_1)}{|(x-x_1,y-y_1)|^2}\bigg(P_{\epsilon}(\phi_{\epsilon}(x_1,y_1))+\text{sgn}(x_1)\text{sgn}(y_1)\bigg)dx_1dy_1\\&+\int_{B_{r_0}^{c}\cap \mathbb{T}^2}\bigg(\frac{(x-x_1,y-y_1)}{|(x-x_1,y-y_1)|^2}-\frac{(-x_1,y_1)}{|(x_1,y_1)|^2}\bigg)\bigg(P_{\epsilon}(\phi_{\epsilon})(x_1,y_1)+\text{sgn}(x_1)\text{sgn}(y_1)\bigg)dx_1dy_1\\&+\int_{\mathbb{T}^2}\bigg(\nabla G_{x,y}((x,y),(x_1,y_1))-\frac{(x-x_1,y-y_1)}{|(x-x_1,y-y_1)|^2}\bigg)\bigg(P_{\epsilon}(\phi_{\epsilon})(x_1,y_1)+\text{sgn}(x_1)\text{sgn}(y_1))\bigg)dx_1dy_1.
 \end{aligned}    
 \end{equation*} 
 As a consequence, we have \begin{equation}\label{velocity estimate 2}
 \begin{aligned}
  &\quad\big |\nabla(\phi_{\epsilon}-\psi_0)(x,y)\big |\\&\leq \int_{B_{r_0}}\frac{C}{|(x-x_1,y-y_1)|}\bigg(1+\big |P_{\epsilon}(\phi_{\epsilon}) \big |\bigg)dx_1y_1\\& +\frac{1000|(x,y)|}{r_0^2}\int_{\mathbb{T}^2}\big |P_{\epsilon}(\phi_{\epsilon}(x_1,y_1))+\text{sgn}(x_1)\text{sgn}(y_1)\big |dx_1dy_1\\&:=I_1+I_2.
 \end{aligned}
 \end{equation}
 
 \noindent\emph{Estimate of $I_1$:}
 
 By the various symmetry properties of $\phi_{\epsilon}$, we have that
 \begin{equation}\label{Cor 3.1 3}
 \begin{aligned}
    &I_1(r_0,x,y)\leq \int_{B_{r_0}\cap\mathbb{T}^{2+}}\frac{C}{|(x-x_1,y-y_1)|}\bigg(1+|P_{\epsilon}(\phi_{\epsilon}) |\bigg)(x_1,y_1)dx_1y_1.
     \end{aligned}
 \end{equation}
Since in $B_{r_0}\cap\mathbb{T}^{2+}$, by Lemma \ref{maximum principle}, we would have \begin{equation}\label{Cor 3.1 2}
        \big |P(\phi_{\epsilon}(x,y))\big |\leq \frac{\epsilon^s}{|\phi|_{\epsilon}^s(x,y)}\leq \epsilon^{s}\frac{1}{\tilde{\psi}^s(x,y)}\leq C \epsilon^{\frac{s}{1+s}} \frac{1}{y^{\frac{2s}{s+1}}}.
\end{equation}  
By \eqref{Cor 3.1 2}, symmetry of $\phi_{\epsilon}$,  and apply Corollary \ref{Del}, we have:
\begin{equation}
    \big |I_1(x,y,r_0)\big |\leq \int_{B_{r_0}}\frac{C}{|(x_1-x,y_1-y)|}\bigg(1+\epsilon^{\frac{s}{1+s}}\frac{1}{|y_1|^{\frac{2s}{s+1}}}\bigg)dx_1dy_1.
\end{equation}
We now choose a constant $p_0$ satisfying $1<p_0<\frac{s+1}{2s}$ and apply Corollary \ref{Cor 3.1 2}, then we have  
\begin{equation}\label{C^1 regularity 22}
\begin{aligned}
 &\big |I_1(x,y,r_0)\big |\leq C C(p_0)r_0\bigg(\int_{-1}^{1}\big(1+\epsilon^{\frac{s}{1+s}}\frac{1}{|r_0y_1|^{\frac{2s}{s+1}}}\big)^{p_0}dy_1\bigg)^{\frac{1}{p_0}}\\&
 \leq C(\epsilon,p_0) (r_0+r_0^{\frac{1-s}{s+1}})
\end{aligned}
\end{equation}
In particular, \eqref{C^1 regularity 22} implies \begin{equation}\label{C^1 regularity 23}
  lim_{r_0\rightarrow 0}\|I_1(r,\cdot,\cdot)\|_{L^{\infty}}=0 .
\end{equation}

\noindent\emph{Estimate for $I_2$:}\\

We can use the bound $|\phi_{\epsilon}|>|\psi_0|$ and  get that \begin{equation*}
    I_2(x,y,r_0)\leq C \frac{|(x,y)|}{r_0^2}\int_{\mathbb{T}^2} \frac{1}{|x_1y_1|^s}dx_1dy_1\leq C \frac{|(x,y)|}{r_0^2}.
\end{equation*} Thus, for a fixed $r_0$, \begin{equation}\label{C^1 regularity 2}
    \lim_{(x,y)\rightarrow 0}I_2(r_0,x,y)=0.
\end{equation}
By \eqref{Cor3.1 1}, \eqref{velocity estimate 2}, \eqref{C^1 regularity 23} and \eqref{C^1 regularity 2}, we get
\begin{equation*}
    lim_{(x,y)\rightarrow 0}|\nabla(\phi_{\epsilon}-\psi_0)|(x,y)=0,
\end{equation*}
and it follows that 
\begin{equation*}
    \phi_{\epsilon}\in C^1({\mathbb{T}}^2).
\end{equation*}
 \end{proof}
\subsubsection{On the $C^1$ convergence of $\phi_{\epsilon}$ to $\psi_0$}
In the previous section, we proved that $\phi_{\epsilon}\in C^1$. In this section, we will prove the following statement: \begin{theorem}
\label{C1regularitytheorem2}
    $\lim_{\epsilon\rightarrow 0}\|\phi_{\epsilon}-\psi_0\|_{C^1(\mathbb{T}^2)}=0$.
\end{theorem}
By the symmetry of Biot-Savart law and the symmetry of $\phi_{\epsilon}$ and $\psi_0$, it suffices to prove the lemma below:
\begin{lemma}\label{lemma3}
 we have 
\begin{equation}
    I=\int_{\mathcal{T}} \frac{|P_{\epsilon}(\phi_{\epsilon})+1)|(x_1,y_1)}{|(x-x_1,y-y_1)|} dx_1 dy_1\leq C \epsilon^{\frac{s}{2}}.
\end{equation}
\end{lemma}
\begin{proof}
Denoting $\Omega_4=B_{C(s)\epsilon^{\frac{1}{2}}}\cap \mathcal{T}$ where $C(s)$ is given in Lemma \ref{maximum principle}, we have:
\begin{equation*}
    \begin{aligned}
&I=\int_{\Omega_4}\frac{|P_{\epsilon}(\phi_{\epsilon})+1|(x_1,y_1)}{|(x-x_1,y-y_1)|} dx_1y_1\\&+\int_{\mathcal{T}- \Omega_4}\frac{|P_{\epsilon}(\phi_{\epsilon})+1)|(x_1,y_1)}{|(x-x_1,y-y_1)|} dx_1y_1\\&:=I_1+I_2.    
    \end{aligned}
\end{equation*}

\noindent\emph{Estimate for $I_1$:}\\
In $\Omega_4$, if $\phi_{\epsilon}>\epsilon$, we have \begin{equation}\label{C^1 regularity 3}
    P_{\epsilon}(\phi_{\epsilon})=-1,
\end{equation} 
if $\phi_{\epsilon}<\epsilon$, then by Lemma \ref{maximum principle}, we have \begin{equation}\label{C^1 regularity 4}
    P_{\epsilon}(x,y)\leq C \epsilon^{\frac{s}{s+1}}\frac{1}{|(x,y)|^{\frac{2s}{s+1}}\sin^s(2\theta)}\leq C\epsilon^{\frac{s}{1+s}}\frac{1}{y^{\frac{2s}{s+1}}}.
\end{equation} By \eqref{C^1 regularity 3} and \eqref{C^1 regularity 4}, we have in
 $\Omega_4$,\begin{equation}\label{C^1 regularity 5}
    |P_{\epsilon}(\phi_{\epsilon})+1|(x,y)\leq C( \epsilon^{\frac{s}{1+s}}\frac{1}{y^{\frac{2s}{s+1}}}+1).
\end{equation}  By the explicit form of $I_1$, Lemma \ref{Steiner} and \eqref{C^1 regularity 5} implies that\begin{equation} \label{C^1 regularity 11}
\begin{aligned}
   & I_1\leq  C\sqrt{|\Omega_4|}+ C\int_{\Omega_4} \frac{1}{|(x-x_1,y-y_1)|}\epsilon^{\frac{s}{1+s}}\frac{1}{y_1^{\frac{2s}{s+1}}}dx_1dy_1\\&\leq  C\sqrt{\epsilon}+ C\epsilon^{\frac{s}{1+s}} \int_{B_{C(s)\epsilon^{\frac{1}{2}}}} \frac{1}{|(x-x_1,y-y_1)|} \frac{1}{y_1^{\frac{2s}{s+1}}}dx_1dy_1.
    \end{aligned}
\end{equation} Let $p_0$ be a fixed number such that $1<p_0<\frac{1+s}{2s}$, we may apply Corollary \ref{Del} with $p=p_0$ and we get \begin{equation}
\begin{aligned}
&\epsilon^{\frac{s}{1+s}} \int_{B_{C(s)\epsilon^{\frac{1}{2}}}} \frac{1}{|(x-x_1,y-y_1)|} \frac{1}{y_1^{\frac{2s}{s+1}}}dx_1dy_1 \\& \leq C(s) \epsilon^{\frac{s}{1+s}} \epsilon^{\frac{1}{2}} (\int_{-1}^{1}(\frac{1}{C(s)\epsilon^{\frac{1}{2}}|y|})^{\frac{2sp}{s+1}}dy)^{\frac{1}{p}}\\&\leq A(s) \epsilon^{\frac{1}{2}}.
\end{aligned}
\end{equation}  
Then by \eqref{C^1 regularity 11}, we have \begin{equation}\label{C^1 regularity 12}
   I_1\leq C(s)\epsilon^{\frac{1}{2}}. 
\end{equation}
\noindent\emph{Estimate for $I_2$:}\\
Note in $\mathcal{T}-\Omega_4$, by Lemma \ref{level set} and Lemma \ref{maximum principle}, define $D=\left\{(x,y): C(s)\epsilon^{\frac{1}{2}}<x<\frac{1}{2}, 0<y<\frac{C\epsilon}{x}\right\}$, we have that
\begin{equation}\label{C^1 regularity 8}
\begin{aligned}
   &\int_{\mathcal{T}-\Omega_4}\frac{1}{|(x-x_1,y-y_1)|}|P_{\epsilon}(\phi_{\epsilon})+1|(x_1,y_1)dx_1dy_1\\&= \int_{D}\frac{1}{|(x-x_1,y-y_1)|}|P_{\epsilon}(\phi_{\epsilon})+1|(x_1,y_1)dx_1dy_1\\& \leq C\int_{D}\frac{1}{|(x-x_1,y-y_1)|} \frac{\epsilon^{s}}{x_1^sy_1^s}dx_1y_1
   \end{aligned}
\end{equation} 
Define $D_i=B_{C\epsilon^{\frac{1}{2}}}(i\epsilon^{\frac{1}{2}},0)$, fix a large integer $N=\lfloor\frac{10}{\epsilon^{\frac{1}{2}}}\rfloor$, and we have \begin{equation}\label{cover}
    D\subseteq \cup_{i}^{N}D_{i}.
\end{equation}  
   By \eqref{cover}, we have \begin{equation}
\begin{aligned}
   &\int_{\mathcal{T}-\Omega_4}\frac{1}{|(x-x_1,y-y_1)|}|P_{\epsilon}(\phi_{\epsilon})+1|(x_1,y_1)dx_1dy_1\\&\leq C\int_{D}\frac{1}{|(x-x_1,y-y_1)|} \frac{\epsilon^{s}}{x_1^sy_1^s}dx_1y_1\\&\leq \epsilon^{s}\sum_{i=1}^{N}  C\int_{D_{i}}\frac{1}{|(x-x_1,y-y_1)|}\frac{1}{(Ci\epsilon^{\frac{1}{2}})^{s}}\frac{1}{y_1^s} dx_1dy_1\\&= \epsilon^{\frac{s}{2}}\sum_{i=1}^{N}  C\int_{D_{i}}\frac{1}{|(x-x_1,y-y_1)|}\frac{1}{i^{s}}\frac{1}{y_1^s} dx_1dy_1.
   \end{aligned}
\end{equation} 
  Then we apply Corollary \ref{Del} to $D_i$ for $1\leq i\leq N$, and we get \begin{equation}
      \begin{aligned}
      &\int_{\mathcal{T}-\Omega_4}\frac{1}{|(x-x_1,y-y_1)|}|P_{\epsilon}(\phi_{\epsilon})+1|(x_1,y_1)dx_1dy_1\\&\leq C\epsilon^{\frac{s}{2}} \sum_{i=1}^{N} \frac{1}{i^s} \epsilon^{\frac{1}{2}}(\int_{-1}^{1}(\frac{1}{C\epsilon^{\frac{1}{2}}|y|})^{ps}dy)^{\frac{1}{p}}\\&= C\sum_{i=1}^{\frac{10}{\epsilon^{\frac{1}{2}}}}\frac{1}{i^s}\epsilon^{\frac{1}{2}}\\&\leq C(s)\epsilon^{\frac{s}{2}},
      \end{aligned}
  \end{equation}
  for some $1<p<\frac{1}{s}$. 
  Then we have 
  \begin{equation}\label{C^1 regularity 20}
      I_2 \leq C(s) \epsilon^{\frac{s}{2}}.
  \end{equation}
By \eqref{C^1 regularity 12} and \eqref{C^1 regularity 20}, we finish the proof of Lemma \ref{lemma3}.
\end{proof}
Lemma \ref{C1regularitytheorem2} follows from Lemma \ref{lemma3}, and we now finish  the existence and uniqueness part of the proof of Theorem \ref{Algebraic singularity} in the case $\frac{1}{2}\leq s<1$.

\subsubsection{Lagrangian trajectories}
In the previous sections, we have shown the $C^1$ convergence of $\phi_{\epsilon}$ to $\psi_0$ as $\epsilon \rightarrow 0$ when $0<s<1$.  In this section, we will finish the proof of Theorem \ref{Algebraic singularity} by proving there are multiple particle trajectories that cross the origin in our singular steady state. We give the statement below:
\begin{theorem}\label{small scale}
For $\epsilon$ sufficiently small
 we have the following estimate for the vertical velocity associated to $\phi_{\epsilon}$:
 \begin{equation}
    u_{\epsilon}^{2}(0,y_1)=\frac{\partial \phi_{\epsilon}}{\partial x}(0,y_1)\geq C_{2} y_1^{1-s}, \text{for small and positive $y_1$.}
 \end{equation}
 As a direct consequence, non-trivial trajectories can arrive at and leave the origin in finite time.
\end{theorem} 
\begin{proof}[Proof of Theorem \ref{small scale}]
We choose a small constant $\delta_1$ such that    
\begin{equation}\label{delta 1}
\begin{aligned}
    & 0<2\delta_1<\epsilon.\\
\end{aligned}
\end{equation}
Since $\delta_1$ is so small, for $(x,y)\in B_{\delta_1}\cap\mathcal{T}$, we have (since $\phi_\epsilon(0,0)=\nabla\phi_\epsilon(0,0)=0$):
\begin{equation}\label{velocitypsi1}
    \begin{aligned}
        \phi_{\epsilon}(x,y)\leq 2(x^2+y^2)^{\frac{1}{2}}
    \end{aligned}
\end{equation}
and \begin{equation}\label{velocitypsi2}
        \phi_{\epsilon}(x,y)>\psi_0(x,y)> 0,
\end{equation}
by the maximum principle.
By the symmetry of $\phi_{\epsilon}$ and \eqref{velocitypsi1}, we have on $B_{\delta_1}(0)\cap \mathbb{T}^{2+}$,\begin{equation}\label{delta 2}
   0<\phi_{\epsilon}<\epsilon.
\end{equation}  As a result, from \eqref{velocitypsi1}-\eqref{velocitypsi2}, we have on $B_{\delta_1}(0)\cap \mathbb{T}^{2+}$ that
\begin{equation}\label{Lower bound}
    P_{\epsilon}(\phi_{\epsilon}(x,y))\leq \frac{-\epsilon^{s}}{(x^2+y^2)^{\frac{s}{2}}}.
\end{equation}
From the symmetry assumptions on $P_{\epsilon}(\phi_{\epsilon})$ and $\psi_{\epsilon}$, using Lemma \ref{GreensFunction} and Lemma \ref{Uniform upper bound}, we have
\begin{equation}
    \begin{aligned}
        &\quad u_{\epsilon}^{2}(0,y_1)\\&=\int_{\mathbb{T}^2}\frac{\partial G}{\partial x_1}((0,y_1),(x,y))P_{\epsilon}(\phi_{\epsilon}(x,y))dxdy\\&=O(y_1)+\int_{\mathbb{T}^2}-\frac{1}{2\pi}\frac{x}{x^2+(y-y_1)^2}P_{\epsilon}(\phi_{\epsilon}(x,y))dxdy\\&=O(y_1)+\frac{1}{2\pi}\int_{\mathbb{T}^{2+}}\frac{-8xyy_1}{(x^2+y^2+y_1^2)^2-4y_1^2x^2} P_{\epsilon}(\phi_{\epsilon}(x,y)) dxdy.
    \end{aligned}
\end{equation}
As a result, \begin{equation}\label{velocity}
    \begin{aligned}
    &\quad u_{\epsilon}^{2}(0,y_1)\\&=O(y_1)+\frac{1}{2\pi}\int_{\mathbb{T}^{2+}\cap B_{\delta_1}(0)}\frac{-8xyy_1}{(x^2+y^2+y_1^2)^2-4y_1^2x^2}P_{\epsilon}(\phi_{\epsilon}(x,y))dxdy\\&+\frac{1}{2\pi}\int_{\mathbb{T}^{2+}\cap B_{\delta_1}^{c}(0)}\frac{-8xyy_1}{(x^2+y^2+y_1^2)^2-4y_1^2x^2}P_{\epsilon}(\phi_{\epsilon}(x,y))dxdy\\&=O(y_1)+L_1+L_2.
    \end{aligned}
\end{equation}
\noindent\emph{Estimate on $L_1$}:\\ Let $y_1<\frac{\delta_1}{10}$,
by \eqref{Lower bound}, we have 
\begin{equation}\label{L1}
\begin{aligned}
      L_1&>\frac{4\epsilon^{s}y_1}{\pi}\int_{\mathbb{T}^{2+}\cap B_{\delta_1}(0)}\frac{xy}{(x^2+y^2+y_1^2)^2} \frac{1}{(x^2+y^2)^{\frac{s}{2}}}dxdy\\&>\frac{4\epsilon^{s}y_1}{\pi}\int_{\{y_1^2<x^2+y^2<100y_1^2\}}\frac{xy}{(x^2+y^2+y_1^2)^2} \frac{1}{(x^2+y^2)^{\frac{s}{2}}}dxdy\\&\geq C_{2} y_1\int_{y_1}^{10y_1}\frac{1}{r^{1+s}}dr\geq  C_{2} y_1^{1-s}.
\end{aligned}
\end{equation}
\\\noindent\emph{Estimate on $L_2$:}\\
By Lemma \ref{Uniform upper bound},
\begin{equation}\label{L2}
    \begin{aligned}
 |L_2|&\leq \frac{2y_1}{\pi}\int_{\mathbb{T}^{2+}\cap B_{\delta_1}^{c}(0)}\frac{1}{[x^2+y^2]}|P_{\epsilon}(\phi_{\epsilon}(x,y))|dxdy\\&<\frac{2y_1}{\pi \delta_1^2}\int_{\mathbb{T}^{2+}} |P_{\epsilon}(\phi_{\epsilon})(x,y)|dxdy \leq  C_{2} y_1.        
    \end{aligned}
\end{equation}
 
Combine \eqref{velocity}, \eqref{L1} and \eqref{L2}, we finish the proof of Theorem \ref{small scale}.
\end{proof}

\appendix
\section{}
We now proceed to prove some  technical results that we used in the course of proving the main theorems. First, let us recall the following standard fact (see \cite{Denisov2012DoubleEG}). 
\begin{lemma}\label{GreensFunction}
The Green's function for the Laplacian on $\mathbb{T}^2$ can be written as  \begin{equation}\label{Green}
        G(x,y)=\frac{1}{2\pi}\ln{\left|x-y\right|}+f(x-y),
    \end{equation}  where $f$ is smooth, and $\left|.\right|$ is the distance in $\mathbb{T}^2$.
\end{lemma}

\subsection{Proof of the property of $\psi_0$}\ \\
We begin with establishing the Properties from Section \ref{Construction1}.  The symmetry follows from the explicit form of \begin{equation*}
    \triangle^{-1}[-\text{sgn}(x)\text{sgn}(y)].
\end{equation*} Since 
we have \begin{equation}
\begin{aligned}
    &\triangle \psi_0(x,y)=-1,\text{for $(x,y)\in \mathbb{T}^{2+}$}.\\
    &\psi_0(x,y)=0, \text{for $(x,y)\in \partial \mathbb{T}^{2+}$}.
\end{aligned}
    \end{equation}
 By maximum principle, $\psi_0$ is positive on $\mathbb{T}^{2+}$.\\
\eqref{asytop} follows from explicit calculation and was given in \cite{Denisov2012DoubleEG}.\\
\subsection{Proof of Lemma \ref{Barrier function}}
\begin{proof}[Proof of Lemma \ref{Barrier function}]
By the scaling and symmetry of \eqref{special}, it behooves us to take the ansatz \begin{equation}\label{barrier}
    \tilde{\psi}=\epsilon^{\frac{s}{s+1}}r^{\frac{2}{s+1}}K(\theta),\text{with $K(\theta)=K(\frac{\pi}{2}-\theta)$}.
\end{equation}  \eqref{special} then gives the non-linear ODE 
\begin{equation}\label{Raw}
\begin{aligned}
 &\frac{4}{(1+s)^2}K+K^{''}=-\frac{1}{K^s}.\\
 &K(0)=0, K^{'}(\frac{\pi}{4})=0.
\end{aligned}
\end{equation}
 Multiplying \eqref{Raw} by $K^{'}$, we find a first integral of \eqref{Raw}:\begin{equation*}
     \Big(\frac{K^{'2}}{2}+\frac{2}{(1+s)^2}K^{2}+\frac{2}{1-s}K^{1-s}\Big)^{'}=0.
 \end{equation*}
By setting  \begin{equation*}
    K^{'}(0)=\sqrt{\frac{4}{(1+s)^2}B^2+\frac{2}{1-s}B^{1-s}},
\end{equation*}
 it now suffices to find a solution $(K,B)$ to \begin{equation}\label{ODE}
\begin{aligned}
     &\frac{4}{(1+s)^2}K+K^{''}=-\frac{1}{K^s},\\
     &K(0)=0,
     K(\frac{\pi}{4})=B,\\
    &K^{'}(0)=\sqrt{\frac{4}{(1+s)^2}B^2+\frac{2}{1-s}B^{1-s}}.
     \end{aligned}
\end{equation} 
While the existence of solution to \eqref{ODE} is equivalent to \begin{equation}
    I(B)=\int_{0}^{B}\frac{dk}{\sqrt{\frac{4}{(1+s)^2}(B^2-k^2)+\frac{2}{1-s}(B^{1-s}-k^{1-s})}}=\frac{\pi}{4}.
\end{equation}
Letting $k=B\tilde{k}$, we have \begin{equation*}
    I(B)=\int_{0}^{1}\frac{d\tilde {k}}{\sqrt{\frac{4}{(1+s)^2}(1-\tilde{k}^2)+B^{-1-s}(\frac{2}{1-s}-\frac{2\tilde{k}^{1-s}}{1-s})}}.
\end{equation*}
We have
\begin{equation*}
    I(0)=0,  I(\infty)=\frac{(1+s)\pi}{4}>\frac{\pi}{4}.
\end{equation*}
Moreover,by the dominated convergence theorem, $I(B)\in C(\mathbb{R}^{+})$.  Thus by continuity, we can find $B_0>0$ such that $I(B_0)=\frac{\pi}{4}$. Consequently, we have constructed a solution to \eqref{ODE}.
Based on our construction, we have K is a $C^1$ function even symmetric to $\frac{\pi}{4}$ with \begin{equation}
    K^{'}(\theta)>0,\text{for $\theta\in [0,\frac{\pi}{4}),$} 
\end{equation} as a result \begin{equation*}
    \frac{1}{C}\sin(2\theta)<K(\theta) <C\sin(2\theta).
\end{equation*}  We then finish the proof of \eqref{maxiun principle 18} by \eqref{barrier}.
\end{proof}
\section*{Acknowledgement}
The authors thank Siming He for providing various suggestions in writing the paper.
Both authors were partially supported by the NSF grants DMS 2043024 and DMS 2124748. T.M.E.
was partially supported by an Alfred P. Sloan Fellowship.
\section*{Data availability}
    Data sharing not applicable to this article as no data sets were generated or analyzed
during the current study.
\bibliographystyle{plain}
\bibliography{main}

\begin{thebibliography}{10}

\bibitem{Bahouri1994EquationsDT}
Hajer Bahouri and J.~Y. Chemin.
\newblock Equations de transport relatives {\`a} des champs de vecteurs
  non-lipschitziens et m{\'e}canique des fluides.
\newblock {\em Archive for Rational Mechanics and Analysis}, 127:159--181,
  1994.

\bibitem{Beale1984RemarksOT}
J.~Thomas Beale, Tosio Kato, and Andrew~J. Majda.
\newblock Remarks on the breakdown of smooth solutions for the 3-d euler
  equations.
\newblock {\em Communications in Mathematical Physics}, 94:61--66, 1984.

\bibitem{Bedrossian2013InviscidDA}
Jacob Bedrossian and Nader Masmoudi.
\newblock Inviscid damping and the asymptotic stability of planar shear flows
  in the 2d euler equations.
\newblock {\em Publications math{\'e}matiques de l'IH{\'E}S}, 122:195--300,
  2013.

\bibitem{Choffrut2012LocalSO}
Antoine Choffrut and Vladim{\'i}r Sver{\'a}k.
\newblock Local structure of the set of steady-state solutions to the 2d
  incompressible euler equations.
\newblock {\em Geometric and Functional Analysis}, 22:136--201, 2012.

\bibitem{choi2022infinite}
Kyudong Choi and In-Jee Jeong.
\newblock Infinite growth in vorticity gradient of compactly supported planar
  vorticity near lamb dipole.
\newblock {\em Nonlinear Analysis: Real World Applications}, 65:103470, 2022.

\bibitem{constantin2021flexibility}
Peter Constantin, Theodore~D Drivas, and Daniel Ginsberg.
\newblock Flexibility and rigidity in steady fluid motion.
\newblock {\em Communications in Mathematical Physics}, 385(1):521--563, 2021.

\bibitem{Zelati2020StationarySN}
Michele Coti~Zelati, Tarek~M Elgindi, and Klaus Widmayer.
\newblock Stationary structures near the kolmogorov and poiseuille flows in the
  2d euler equations.
\newblock {\em Archive for Rational Mechanics and Analysis}, 247(1):12, 2023.

\bibitem{Pino1992AGE}
Manuel del Pino.
\newblock A global estimate for the gradient in a singular elliptic boundary
  value problem.
\newblock {\em Proceedings of The Royal Society A: Mathematical, Physical and
  Engineering Sciences}, 122:341--352, 1992.

\bibitem{Denisov2012DoubleEG}
Sergey Denisov.
\newblock Double exponential growth of the vorticity gradient for the
  two-dimensional euler equation.
\newblock {\em Proceedings of the American Mathematical Society},
  143(3):1199--1210, 2015.

\bibitem{Denisov2008InfiniteSG}
Sergey~A. Denisov.
\newblock Infinite superlinear growth of the gradient for the two-dimensional
  euler equation.
\newblock {\em Discrete and Continuous Dynamical Systems}, 23:755--764, 2008.

\bibitem{do2019vorticity}
Tam Do.
\newblock On vorticity gradient growth for the axisymmetric 3d euler equations
  without swirl.
\newblock {\em Archive for Rational Mechanics and Analysis}, 234(1):181--209,
  2019.

\bibitem{drivas2022singularity}
Theodore~D Drivas and Tarek~M Elgindi.
\newblock Singularity formation in the incompressible euler equation in finite
  and infinite time.
\newblock {\em arXiv preprint arXiv:2203.17221}, 2022.

\bibitem{elgindi2020symmetries}
Tarek~M Elgindi and In-Jee Jeong.
\newblock Symmetries and critical phenomena in fluids.
\newblock {\em Communications on Pure and Applied Mathematics}, 73(2):257--316,
  2020.

\bibitem{Hoang2014NoLD}
Vu~Hoang and Maria Radosz.
\newblock No local double exponential gradient growth in hyperbolic flow for
  the euler equation.
\newblock {\em Transactions of the American Mathematical Society},
  369:7169--7211, 2014.

\bibitem{itoh2016growth}
Tsubasa Itoh, Hideyuki Miura, and Tsuyoshi Yoneda.
\newblock The growth of the vorticity gradient for the two-dimensional euler
  flows on domains with corners.
\newblock {\em arXiv preprint arXiv:1602.00815}, 2016.

\bibitem{Kiselev2019}
Alexander Kiselev and Chao Li.
\newblock Global regularity and fast small-scale formation for euler patch
  equation in a smooth domain.
\newblock {\em Communications in Partial Differential Equations}, 44:1--30, 01
  2019.

\bibitem{Kiselev2013SmallSC}
Alexander Kiselev and Vladimir {\v{S}}ver{\'a}k.
\newblock Small scale creation for solutions of the incompressible
  two-dimensional euler equation.
\newblock {\em Annals of mathematics}, 180(3):1205--1220, 2014.

\bibitem{wirosoetisno2005persistence}
D~Wirosoetisno and J~Vanneste.
\newblock Persistence of steady flows of a two-dimensional perfect fluid in
  deformed domains.
\newblock {\em Nonlinearity}, 18(6):2657, 2005.

\bibitem{wolibner1933theoreme}
Witold Wolibner.
\newblock Un theor{\`e}me sur l'existence du mouvement plan d'un fluide
  parfait, homog{\`e}ne, incompressible, pendant un temps infiniment long.
\newblock {\em Mathematische Zeitschrift}, 37(1):698--726, 1933.

\bibitem{xiaoqian2014}
Xiaoqian Xu.
\newblock Fast growth of the vorticity gradient in symmetric smooth domains for
  2d incompressible ideal flow.
\newblock {\em Journal of Mathematical Analysis and Applications}, 439, 11
  2014.

\bibitem{YUDOVICH19631407}
V.I. Yudovich.
\newblock Non-stationary flow of an ideal incompressible liquid.
\newblock {\em USSR Computational Mathematics and Mathematical Physics},
  3(6):1407--1456, 1963.

\bibitem{zlatovs2015exponential}
Andrej Zlato{\v{s}}.
\newblock Exponential growth of the vorticity gradient for the euler equation
  on the torus.
\newblock {\em Advances in Mathematics}, 268:396--403, 2015.

\end{thebibliography}
\begin{itemize}
    \item Department of mathematics, Duke University, Durham, NC\\
E-mail address: tarek.elgindi@duke.edu
    \item Department of mathematics, Duke University, Durham, NC\\
    Email address: yh298@duke.edu
\end{itemize}
\end{document}